\newcommand{\tc}[2]{\textcolor{#1}{#2}}
\definecolor{red}{rgb}{.5,0,0} \newcommand{\red}[1]{\tc{red}{#1}}
\definecolor{green}{rgb}{0,.5,0} 
\definecolor{blue}{rgb}{0,0,0.5}
\newcommand{\ER}{Erd\H{o}s--R\'{e}nyi }
\newcommand{\Erdos}{Erd\H{o}s--R\'{e}nyi }
\def\@email#1#2{%
 \endgroup
 \patchcmd{\titleblock@produce}
  {\frontmatter@RRAPformat}
  {\frontmatter@RRAPformat{\produce@RRAP{*#1\href{mailto:#2}{#2}}}\frontmatter@RRAPformat}
  {}{}
}%
\newtheorem{thm}{Theorem}
\newtheorem{theorem}[thm]{Theorem}
\newtheorem{lem}[thm]{Lemma}
\newtheorem{lemma}[thm]{Lemma}
\newtheorem{cor}[thm]{Corollary}
\begin{document}

\preprint{AIP/123-QED}

\title[Synchronization of random networks]{A global synchronization theorem for oscillators on a random graph}

\author{Martin Kassabov}
\altaffiliation{Department of Mathematics, Cornell University, Ithaca, NY 14853}%
\author{Steven H. Strogatz}
\altaffiliation{Department of Mathematics, Cornell University, Ithaca, NY 14853}%
\author{Alex Townsend}
\altaffiliation{Department of Mathematics, Cornell University, Ithaca, NY 14853}%

\date{\today}

\begin{abstract}
Consider $n$ identical Kuramoto oscillators on a random graph. Specifically, consider \ER random graphs in which any two oscillators are bidirectionally coupled with unit strength, independently and at random, with probability $0\leq p\leq 1$. We say that a network is globally synchronizing if the oscillators converge to the all-in-phase synchronous state for almost all initial conditions.  Is there a critical threshold for $p$ above which global synchrony is extremely likely but below which it is extremely rare? It is suspected that a critical threshold exists and is close to the so-called connectivity threshold, namely, $p\sim \log(n)/n$ for $n \gg 1$. Ling, Xu, and Bandeira made the first progress toward proving a result in this direction: they showed that if $p\gg \log(n)/n^{1/3}$, then \ER networks of Kuramoto oscillators are globally synchronizing with high probability as $n\rightarrow\infty$.
Here we improve that result by showing that $p\gg \log^2(n)/n$ suffices.  Our estimates are explicit: for example, we can say that there is more than a $99.9996\%$ chance that a random network with $n = 10^6$ and $p>0.01117$ is globally synchronizing. 
\end{abstract}

\maketitle

\begin{quotation}
Random graphs are fascinating topologies on which to study the dynamics of coupled oscillators. Despite the statistical nature of random graphs, coherent synchronization seems to be ubiquitous above a critical threshold.  To investigate this, we consider the homogeneous version of the Kuramoto model in which all $n$ oscillators have the same intrinsic frequency.  For simplicity, the oscillators are arranged on an \ER random graph in which any two oscillators are coupled with unit strength by an undirected edge, independently at random, with some probability $0\leq p \leq 1$; however, our proof strategy can be applied to other random graph models too. We say that a network is globally synchronizing if the oscillators converge to the all-in-phase synchronous state for almost all initial conditions. For what values of $p$ is an \ER random network very likely to be globally synchronizing? Here we prove that $p\gg \log^2(n)/n$ as $n\rightarrow\infty$ is a sufficient condition. Our proof uses trigonometric inequalities and an amplification argument involving the first two moments of the oscillator phase distribution that must hold for any stable phase-locked state.  Specifically, we show that the spectral norms of the mean-centered adjacency and graph Laplacian matrix can be used to guarantee that a network is globally synchronizing. Our analysis is explicit, and we can reason about random networks of finite, practical size. 
\end{quotation}

\section{\label{sec:level1}Introduction}
Networks of coupled oscillators have long been studied in biology, physics, engineering, and nonlinear dynamics.~\cite{winfree1967biological,winfree1980geometry, kuramoto1984chemical, mirollo1990synchronization, watanabe1994constants, pikovsky2003synchronization,strogatz2003sync, dorfler2014synchronization, pikovsky2015dynamics, rodrigues2016kuramoto, abrams2016introduction, boccaletti2018synchronization, matheny2019exotic} 
Recently they have begun to attract the attention of other communities as well. For example, oscillator networks have been recognized as having the potential to yield ``beyond-Moore's law'' computational devices~\cite{CsabaPorod} for graph coloring,~\cite{Wu} image segmentation,~\cite{Novikov} and approximate maximum graph cuts.~\cite{Steinerberger} They have also become a model problem for understanding the global convergence of gradient descent in nonlinear optimization.~\cite{ling2018landscape} 

In such settings, global issues come to the fore. When performing gradient descent, for instance, one typically wants to avoid getting stuck in local minima. Conditions to enforce convergence to the global minimum then become desirable. Likewise, if an oscillator network has only a single, globally attracting state, we know exactly how the system will behave in the long run. 

We say that a network of oscillators \emph{globally synchronizes} if it converges to a state for which all the oscillators are in phase, starting from all initial conditions except a set of measure zero. 
Until recently, only a few global synchronization results were known for networks of  oscillators.~\cite{mirollo1990synchronization, watanabe1994constants} These results were restricted to complete graphs, in which each oscillator is coupled to all the others. In the past decade, however, several  advances have been made for a  wider class of network structures, starting with work by  Taylor,~\cite{taylor2012there} who proved that if each oscillator in a network of identical Kuramoto oscillators is connected to at least 94 percent of the others, the network will fall into perfect synchrony for almost all initial conditions, no matter what the topology of the network is like in other respects. Taylor's result was strengthened by Ling, Xu, and Bandeira~\cite{ling2018landscape} in 2018 and further progress has been made since then.~\cite{lu2020synchronization, kassabovtownsendstrogatz}

Ling {\em et al.}~\cite{ling2018landscape} also made a seminal advance in the study of \emph{random} networks. They considered identical Kuramoto oscillators on an \ER random graph,~\cite{ErdosRenyi} in which any two oscillators are coupled with unit strength, independently and at random, with probability $0\leq p\leq 1$; otherwise they are uncoupled. They showed that if $p\gg \log(n)/n^{1/3}$, then with high probability the network is globally synchronizing as $n\rightarrow\infty$.

The open question is to find and prove the sharpest result along these lines. Intuitively, as one increases the value of $p$ from $0$ to $1$, one expects to find a critical threshold above which global synchrony is extremely likely and below which 
it is extremely rare. At the very least, for global synchrony to be ubiquitous, $p$ must be large enough to ensure that the random network is connected, and from the theory of random graphs~\cite{ErdosRenyi} we know that connectedness occurs with high probability once $p>(1+\varepsilon)\log(n)/n$ for any $\varepsilon>0$.  So the critical threshold for global synchronization cannot be any smaller than this connectivity threshold, and is apt to be a little above. On the basis of numerical evidence, Ling {\em et al.}~\cite{ling2018landscape} conjectured, and we agree with them, that if $p\gg \log(n)/n$ then \ER graphs are globally synchronizing as $n\rightarrow\infty$, but nobody has proven that yet. The challenge is to see how close one can get. In this paper, we come within a factor of $\log(n)$ and prove that $p\gg \log^2(n)/n$ is sufficient to give global synchrony with high probability. With the aid of a computer, we have convincing evidence that $p\gg\log(n)/n$ is sufficient as $n\rightarrow \infty$ (see Section~\ref{sec:compOpt}). 

We study the homogeneous Kuramoto model~\cite{jadbabaie2004stability, wiley2006size, taylor2012there} in which each oscillator has the same frequency $\omega$. By going into a rotating frame at this frequency, we can set $\omega = 0$ without loss of generality. Then phase-locked states in the original frame correspond to equilibrium states in the rotating frame. So, to explore the question that concerns us, it suffices to study the following simplified system of identical Kuramoto oscillators:
\begin{equation} 
\frac{d\theta_j}{dt} = \sum_{k=1}^{n} A_{jk} \sin\!\left(\theta_k - \theta_j\right), \quad 1\leq j\leq n, 
\label{eq:dynamical}
\end{equation} 
where $\theta_j(t)$ is the phase of oscillator $j$ (in the rotating frame) and the adjacency matrix $A$ is randomly generated. In particular, $A_{jk} = A_{kj} = 1$ with probability $p$, with $A_{jk} = A_{kj} = 0$ otherwise, independently for $1\leq j, k\leq n$. Thus, all interactions are assumed to be symmetric, equally attractive, and of unit strength. We take the unusual convention that the network can have self-loops so that oscillator $i$ is connected to itself with probability $p$, i.e., $\mathbb{P}\left[A_{jj} = 1\right] = p$. Since $\sin(0)=0$, this convention does not alter the dynamics of the oscillators but it does make proof details easier to write down. 

Finally, since the adjacency matrix $A$ is symmetric, we know that~\eqref{eq:dynamical} is a gradient system.~\cite{wiley2006size,jadbabaie2004stability} Thus, all the attractors of~\eqref{eq:dynamical} are equilibrium points, which means we do not need to concern ourselves with the possibility of more complicated attracting invariant sets like limit cycles, tori, or strange attractors. 

We find it helpful to visualize the oscillators on the unit circle, where oscillator $j$ is positioned at the coordinate $(\cos(\theta_j),\sin(\theta_j))$. From this perspective, a network is globally synchronizing if starting from any initial positions (except a set of measure zero), all the oscillators eventually end up at the same point on the circle. Due to periodicity, we assume that the phases, $\theta_j$, take values in the interval $[-\pi,\pi)$. 

One cannot determine whether a network is globally synchronizing by numerical simulation of~\eqref{eq:dynamical}, as it is impossible to try all initial conditions. Of course, one can try millions of random initial conditions of the oscillators' phases and then watch the dynamics of~\eqref{eq:dynamical}. But even if all observed initial states eventually fall into the all-in-phase state, one cannot conclude the system is globally  synchronizing because other stable equilibria could still exist; their basins might be minuscule but could nevertheless have positive measure. 

With that caveat in mind, we note that such numerical experiments have been conducted, and they suggest that $p \gg \log(n)/n$ is sufficient for global synchronization.~\cite{ling2018landscape} In this paper, we investigate global synchrony via theoretical study. We show that $p\gg \log^2(n)/n$ is good enough to ensure global synchronization with high probability, improving on $p\gg \log(n)/n^{1/3}$ proved by Ling {\em et al.},~\cite{ling2018landscape} and bringing us closer to the connectivity threshold of \ER graphs.

Although we focus on \ER graphs, many of the inequalities we derive hold for any random or deterministic network. To highlight this, we state many of our findings for a general graph $G$ and a general parameter $p\in\mathbb{R}$. In the end, we restrict ourselves back to \ER graphs and take $p$ to be the probability of a connection between any two oscillators. 

Our results depend on both the adjacency matrix $A$ and the graph Laplacian matrix $L = D - A$, where $D$ is a diagonal matrix and $D_{ii}$ is the degree of vertex $i$ (counting self-loops). For any $p\in\mathbb{R}$, denote the shifted adjacency and graph Laplacian matrix by 
\begin{equation} 
\Delta_A = A - pJ_n,\qquad \Delta_L = L - pJ_n + npI_n,
\label{eq:DeltaNotation} 
\end{equation} 
where $J_n$ is the $n\times n$ matrix of all ones and $I_n$ is the $n\times n$ identity matrix. It is worth noting that for \ER graphs, the shifts are precisely the expectation of the matrices as $\mathbb{E}\left[A\right] =  pJ_n$ and $\mathbb{E}\left[L\right] = pJ_n - npI_n$. Remarkably, we show that the global synchrony of a network can be guaranteed by ensuring that the spectral norms $\|\Delta_A\|$ and $\|\Delta_L\|$ satisfy particular inequalities. The spectral norm of a symmetric matrix is the maximum eigenvalue in absolute magnitude, and $\|\Delta_A\|$ and $\|\Delta_L\|$ are extensively studied in the random matrix literature.~\cite{FK,Vu} What is remarkable is that no other information about the network's structure is needed; the norms of these two matrices alone encapsulate the structural aspects that matter for global synchronization. We also find it appealing that the spectral norm of the graph Laplacian matrix appears naturally in our analysis, as it has been used previously to study the dynamics of networks of oscillators~\cite{Pecora} as well as diffusion on graphs.

While we focus in this paper on global synchronization of identical oscillators, Medvedev and his collaborators have considered other aspects of synchronization for {\em non}-identical oscillators on \ER graphs, as well as on other graphs such as Cayley and Watts--Strogatz graphs, often in the continuum limit.~\cite{Medvedev1,Medvedev2,Medvedev3, ChibaMedvedevMizuhara} Their findings have a similar overarching message that synchronization occurs spontaneously above a critical threshold. 

\section{Overview of our proof strategy}
To prove that a random graph is globally synchronizing with high probability, we bridge the gap between spectral graph theory and coupled oscillator theory. The literature contains many good probabilistic estimates for the spectral norm of a random graph's adjacency and graph Laplacian matrices, which we use to control the long-time dynamics of~\eqref{eq:dynamical}. The key to our proof is to establish  conditions on these two spectral norms that force any stable equilibrium to have phases that lie within a half-circle. Confining the phases in this way then guarantees global synchronization, because it is known that the only stable equilibrium of~\eqref{eq:dynamical} with phases confined to a half-circle is the all-in-phase synchronized state.~\cite{jadbabaie2004stability, dorfler2014synchronization}  

Our first attempt at controlling the phases is the inequality we state below as~\eqref{eq:FirstBound}, which can be used to guarantee that very dense \ER graphs are globally synchronizing with high probability; as an aside, when $p=1$ this inequality provides a new proof that a complete graph of identical Kuramoto oscillators is globally synchronizing.~\cite{watanabe1994constants} A similar inequality to~\eqref{eq:FirstBound} was derived by Ling, Xu, and Bandeira~\cite{ling2018landscape} to show that $p\gg \log(n)/n^{1/3}$ is sufficient for global synchrony with high probability.

To improve on~\eqref{eq:FirstBound}, our argument becomes more intricate. We carefully examine the possible distribution of edges between oscillators whose phases lie on different arcs of the circle, and show that any equilibrium is destabilized if there are too many edges between oscillators that have disparate phases (see Lemma~\ref{lem:Edges} and Figure~\ref{fig:CsetEdges}).

\section{Bounds on the order parameter and its higher-order moments}\label{sec:OrderParameter}
An important quantity in the study of Kuramoto oscillators is the so-called complex order parameter, $\rho_1$. The magnitude of $\rho_1$ is between $0$ and $1$ and measures the synchrony of the oscillators in the network. We find it useful to also look at second-order moments of the oscillator distribution for analyzing the synchrony of random networks. Higher-order moments are also called Daido order parameters and can be used to analyze oscillators with all-to-all coupling, corresponding to a complete graph.~\cite{daido1992order,Perez_97,pikovsky2015dynamics,terada2020linear}

For an equilibrium $\theta = (\theta_1,\ldots,\theta_n)$, we define the first- and second-order moments as  
\[
\rho_{1} = \frac{1}{n}\sum_{j} e^{i\theta_j}, \qquad \rho_{2} = \frac{1}{n}\sum_{j} e^{2i\theta_j}.
\]
(For convenience, we use the notation $\sum_{j}$ to mean $\sum_{j=1}^n$.) Without loss of generality, we may assume that the complex order parameter $\rho_{1}$ is real-valued and non-negative. To see this, write $\rho_{1} = |\rho_{1}|e^{i\psi}$ for some $\psi$. Then, $\hat{\theta} = (\theta_1-\psi,\ldots,\theta_n-\psi)$ is also an equilibrium of~\eqref{eq:dynamical} with the same stability properties as $\theta$ since~\eqref{eq:dynamical} is invariant under a global shift of all phases by $\psi$. Therefore, for the rest of this paper, we assume that $\psi=0$ for any equilibrium of interest with $0\leq \rho_1\leq 1$. When $\rho_1 = 1$, the oscillators are in pure synchrony and when $\rho_1\approx 0$, the phases are scattered around the unit circle without a dominant phase.

To avoid working with complex numbers, it is convenient to consider the quantity $|\rho_2|^2$. For $m = 1, 2$, we have
\begin{equation} 
|\rho_m|^2 = \frac{1}{n^2}\left(\sum_k e^{im\theta_k}\sum_j e^{-im \theta_j}\right) = \frac{1}{n^2}\sum_{j,k} \cos(m(\theta_k-\theta_j)). 
\label{eq:SquaredMoments} 
\end{equation} 
By analyzing $\rho_1^2$ and $|\rho_2|^2$, one hopes to witness the rough statistics of an equilibrium to understand its potential for synchrony without concern for its precise pattern of phases. 

Let $q_\theta = (e^{i\theta_1},\dots, e^{i\theta_n})^\top$ and note that 
\[
\sum_{j,k}A_{jk} \cos(\theta_k -\theta_j) = \overline{q_\theta}^\top A q_\theta,
\]
where $A$ is the adjacency matrix, $\overline{q_\theta}$ is the complex conjugate of $q_\theta$, and ${}^\top$ denotes the transpose of a vector. Since $\cos^2(\theta_k -\theta_j) = \frac{1}{2} (\cos (2(\theta_k -\theta_j))+1)$, we have
\[
\sum_{j,k}A_{jk} \cos^2(\theta_k -\theta_j) = \frac{1}{2}\overline{q_{2\theta}}^\top A q_{2\theta} + \frac{1}{2}\mathbf{1}^\top A \mathbf{1},
\]
where $\mathbf{1}$ is the vector of all ones and $q_{2\theta} = (e^{i2\theta_1},\dots, e^{i2\theta_n})^\top$.

We would like to know when a stable equilibrium is close to the all-in-phase state, and we know this when $\rho_1$ is close to $1$.  Therefore, we begin by deriving a lower bound on $\rho_1$ for any stable equilibrium of~\eqref{eq:dynamical}. Similar inequalities involving $\rho_1^2$ and $|\rho_2|^2$ have been used to demonstrate that  sufficiently dense Kuramoto networks are globally synchronizing.~\cite{ling2018landscape,kassabovtownsendstrogatz}

\begin{lem} 
\label{lem:OrderParameters}
Let $G$ be a connected graph and $\theta$ a stable equilibrium of~\eqref{eq:dynamical}. For any $p>0$, we have
\begin{equation} 
\rho_1^2  \geq \frac{1 + |\rho_2|^2}{2} - \frac{2\|\Delta_A\|}{np},
\label{eq:LowerBoundRho1}
\end{equation} 
where the mean-shifted adjacency matrix, $\Delta_A$, is defined in~\eqref{eq:DeltaNotation}. 
\end{lem}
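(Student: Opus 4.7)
The plan is to extract the inequality directly from the linear stability criterion at $\theta$. Differentiating~\eqref{eq:dynamical} shows that the Jacobian at any equilibrium equals $-L(\theta)$, where $L(\theta)$ is the weighted graph Laplacian with off-diagonal entries $-A_{jk}\cos(\theta_k-\theta_j)$. Stability of $\theta$ is equivalent to $L(\theta)\succeq 0$, i.e.
\[
\sum_{j,k} A_{jk}\cos(\theta_k-\theta_j)(v_j-v_k)^2 \;\geq\; 0
\]
for every real vector $v\in\mathbb{R}^n$. The key idea is to probe this inequality with the two natural test vectors $v_j=\cos\theta_j$ and $w_j=\sin\theta_j$ and add the results, since then the combination $(v_j-v_k)^2+(w_j-w_k)^2$ becomes phase-difference-only.

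Indeed, $(\cos\theta_j-\cos\theta_k)^2+(\sin\theta_j-\sin\theta_k)^2 = 2\bigl(1-\cos(\theta_k-\theta_j)\bigr)$, so summing the two quadratic forms yields
\[
\sum_{j,k} A_{jk}\cos(\theta_k-\theta_j)\bigl(1-\cos(\theta_k-\theta_j)\bigr) \;\geq\; 0.
\]
Applying the double-angle identity $\cos^2 x = \tfrac12(1+\cos 2x)$ to the $\cos^2$ piece re-expresses the constraint in terms of the first- and second-harmonic phase sums, giving
\[
2\,\overline{q_\theta}^\top A q_\theta \;\geq\; \mathbf{1}^\top A \mathbf{1} \;+\; \overline{q_{2\theta}}^\top A q_{2\theta}.
\]

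The final step is to substitute $A=pJ_n+\Delta_A$. Because $\mathbf{1}^\top q_\theta = n\rho_1$ (real by the normalization $\psi=0$) and $\mathbf{1}^\top q_{2\theta}=n\rho_2$, the $pJ_n$ contributions to the three quadratic forms are exactly $pn^2\rho_1^2$, $pn^2$, and $pn^2|\rho_2|^2$, which together produce the clean term $\tfrac12(1+|\rho_2|^2)$ after dividing by $pn^2$. The three $\Delta_A$ contributions are error terms that I plan to bound by the operator-norm inequality $|\overline{u}^\top \Delta_A u|\leq \|\Delta_A\|\|u\|^2$; each of $\mathbf{1},q_\theta,q_{2\theta}$ has squared norm $n$, so the three errors contribute at most $n\|\Delta_A\|+n\|\Delta_A\|+2n\|\Delta_A\|=4n\|\Delta_A\|$. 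Dividing through by $2pn^2$ converts this into the advertised loss of $2\|\Delta_A\|/(np)$.

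I do not anticipate a genuine obstacle here: the whole argument is a one-line stability inequality tested against a well-chosen pair of vectors, followed by bookkeeping. The one inspired choice is the test-vector pair $(\cos\theta,\sin\theta)$, which is exactly what turns a vertex-level quadratic form into a phase-only expression amenable to the double-angle identity; every subsequent step is forced. Note that connectedness of $G$ plays no role in this particular argument (it is only used to ensure that the null space of $L(\theta)$ is at most one-dimensional, which is not needed for the quadratic-form computation).
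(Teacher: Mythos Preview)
Your proposal is correct and follows essentially the same route as the paper: both hinge on the stability inequality $\sum_{j,k}A_{jk}\cos(\theta_k-\theta_j)\bigl(1-\cos(\theta_k-\theta_j)\bigr)\ge 0$, then substitute $A=pJ_n+\Delta_A$ and control the three $\Delta_A$ quadratic forms by $n\|\Delta_A\|$ each. The only difference is that the paper quotes this stability inequality from Ling \emph{et al.}, whereas you re-derive it by testing the positive-semidefinite weighted Laplacian against the pair $(\cos\theta,\sin\theta)$; that is exactly the argument behind the cited result, so your write-up is slightly more self-contained but not mathematically distinct.
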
 
\begin{proof} 
Since $\Delta_A = A - pJ_n$, we have 
$$
\overline{q_\theta}^\top A q_\theta = 
p\overline{q_\theta}^\top J_n q_\theta  + \overline{q_\theta}^\top \Delta_A  q_\theta.
$$
One finds that $\overline{q_\theta}^\top J_n q_\theta = n^2\rho_1^2$ by~\eqref{eq:SquaredMoments} and that $|\overline{q_\theta}^\top \Delta_A  q_\theta|\leq \|\Delta_A\| \|q_\theta\|^2\leq n\|\Delta_A\|$ so 
\begin{equation} 
\left| \sum_{j,k}A_{jk} \cos(\theta_k -\theta_j) - n^2p  \rho_1^2 \right| \leq n\|\Delta_A\|. 
\label{eq:DeviationOfTraceAX}
\end{equation} 
By the same reasoning for $\overline{q_{2\theta}}^\top A q_{2\theta}$, we find that
\[
\left| \sum_{j,k}A_{jk} \cos(2(\theta_k -\theta_j)) - n^2 p |\rho_2|^2 \right| \leq n\|\Delta_A\|.
\]
Moreover, $\mathbf{1}^\top A\mathbf{1} =  \mathbf{1}^\top \Delta_A\mathbf{1} + n^2p$, so $\left|\mathbf{1}^\top A\mathbf{1} - n^2p\right|\leq n\|\Delta_A\|$. We conclude that 
\begin{equation} 
\left| \sum_{j,k}A_{jk} \cos^2(\theta_k -\theta_j)- n^2p \frac{|\rho_2|^2+1}{2} \right| \leq n\|\Delta_A\|. 
\label{eq:DeviationOfTraceAY}
\end{equation} 
Since $\theta$ is stable equilibrium for~\eqref{eq:dynamical}, it is known that
\[
\sum_{j,k} A_{jk} \cos (\theta_k - \theta_j)(1 -\cos(\theta_k - \theta_j)) \geq 0
\] 
as shown by Ling {\em et al.}~\cite{ling2018landscape} on p.~1893 of their paper. From~\eqref{eq:DeviationOfTraceAX} and~\eqref{eq:DeviationOfTraceAY}, we must have
$$
n^2p\rho_1^2 + n\|\Delta_A\| \geq n^2p \frac{|\rho_2|^2+1}{2}-n\|\Delta_A\|, 
$$
which is equivalent to the statement of the lemma. 
\end{proof} 

To maximize the lower bound on $\rho_1^2$ from Lemma~\ref{lem:OrderParameters}, one can optimize over $p$. For a random graph where each edge has a fixed probability of being present, independently of the other edges, we usually just select $p$ to be that probability. Regardless, to make the lower bound on $\rho_1^2$ in Lemma~\ref{lem:OrderParameters} useful, we need to find a nontrivial lower bound on $|\rho_2|^2$, since this quantity appears in the right hand side of~\eqref{eq:LowerBoundRho1}. To obtain such a bound we use the following technical lemma. 

\begin{lem} 
Let $G$ be a connected graph and $\theta$ a stable equilibrium. We have 
\[
\|\Delta_A q_\theta \|^2  \geq  n^2p^2 \rho_1^2 \left( \sum_j \sin^2( \theta_j )+ \sum_{j, \cos (\theta_j) \leq 0} \cos^2 (\theta_j)\right),
\]
where $\|\cdot\|$ is the Euclidean norm. 
\label{lem:LowerBoundSin}
\end{lem}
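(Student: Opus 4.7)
The plan is to compute $\|\Delta_A q_\theta\|^2$ by decomposing each component of $\Delta_A q_\theta = A q_\theta - p J_n q_\theta$, and then to exploit the equilibrium and stability conditions to identify the sign of certain cross terms.

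First I would observe that $J_n q_\theta = (\sum_k e^{i\theta_k})\mathbf{1} = n\rho_1 \mathbf{1}$, where $\rho_1$ is real and non-negative by our standing convention. Next, the equilibrium condition $\sum_k A_{jk}\sin(\theta_k-\theta_j)=0$ says exactly that $e^{-i\theta_j}(Aq_\theta)_j$ is real, so
\[
(Aq_\theta)_j = r_j \, e^{i\theta_j}, \qquad r_j := \sum_k A_{jk}\cos(\theta_k-\theta_j).
\]
Consequently $(\Delta_A q_\theta)_j = r_j e^{i\theta_j} - np\rho_1$, and a short expansion gives
\[
|(\Delta_A q_\theta)_j|^2 = r_j^2 - 2\,r_j\, np\rho_1 \cos\theta_j + n^2p^2\rho_1^2.
\]

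Now I would extract a lower bound term by term. Viewed as a quadratic in $r_j$, the right-hand side has unconstrained minimum $n^2p^2\rho_1^2\sin^2\theta_j$, attained at $r_j = np\rho_1\cos\theta_j$. This already yields $|(\Delta_A q_\theta)_j|^2 \geq n^2 p^2 \rho_1^2 \sin^2\theta_j$ for every $j$, giving the first piece of the desired bound. To get the extra contribution from the indices with $\cos\theta_j\leq 0$, I would invoke stability: the Hessian of the potential $E(\theta)=-\tfrac12\sum_{j,k}A_{jk}\cos(\theta_k-\theta_j)$ at a stable equilibrium is positive semidefinite, and its diagonal entries are exactly $r_j$. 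Hence $r_j\geq 0$ for all $j$. Combined with $\rho_1\geq 0$ and $\cos\theta_j\leq 0$, the cross term $-2r_j np\rho_1\cos\theta_j$ is non-negative, so
\[
|(\Delta_A q_\theta)_j|^2 \geq r_j^2 + n^2p^2\rho_1^2 \geq n^2p^2\rho_1^2 = n^2p^2\rho_1^2(\sin^2\theta_j + \cos^2\theta_j)
\]
for those indices. Summing the two pointwise bounds over $j$ and combining them on the set $\{j:\cos\theta_j\leq 0\}$ produces exactly the inequality claimed in the lemma.

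There is no real obstacle; the only delicate point is justifying the non-negativity of the diagonal Hessian entries $r_j$ from stability. I would either cite the gradient-system structure of \eqref{eq:dynamical} together with positive semidefiniteness of the Hessian at a stable equilibrium (testing against the standard basis vectors), or equivalently note that $r_j$ appears as the negative of the $(j,j)$ Jacobian entry and a stable equilibrium of a gradient flow cannot have a strictly negative diagonal in $-H$. Either way, the key mechanism is that stability lets the sign of the cross term be controlled precisely on the "bad" half $\cos\theta_j\leq 0$, while the "good" half is handled by the trivial quadratic minimization.
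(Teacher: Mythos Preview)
Your proof is correct and follows essentially the same approach as the paper: both use the equilibrium condition to write $(Aq_\theta)_j = r_j e^{i\theta_j}$ and the stability condition to get $r_j\geq 0$, then bound $|(\Delta_A q_\theta)_j|^2$ pointwise. The only cosmetic difference is that the paper phrases the computation via the real and imaginary parts of $e^{-i\theta_j}(\Delta_A q_\theta)_j$, whereas you expand the modulus directly and minimize the resulting quadratic in $r_j$; these are the same argument.
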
 
\begin{proof} 
Select any $j$ such that $1 \leq j \leq n$. From the fact that $\theta$ is an equilibrium, we have $\sum_k A_{jk} \sin(\theta_k -\theta_j) = 0$. Moreover, because the equilibrium is stable, we also have $\sum_k A_{jk} \cos(\theta_k-\theta_j) \geq 0$, which follows as the diagonal entries of the Hessian matrix must be nonnegative at a stable equilibrium (see (2.3) of Ling {\em et al.}~\cite{ling2018landscape}). These inequalities can be written as
$$
{\rm Re\,} (e^{-i\theta_j}e_j^\top A q_\theta) \geq 0
\quad \mbox{and }\quad
{\rm Im\,} (e^{-i\theta_j}e_j^\top A q_\theta) = 0,
$$
where $e_j$ is the $j$th unit vector. Since $\Delta_A = A - pJ_n$ and using that $J_n q_\theta = n \rho_1  \mathbf{1}$, we find that
\[
| {\rm Im\,}(e^{-i\theta_j}e_j^\top \Delta_A q_\theta)| =
np\rho_1 \left|\sin(\theta_j)\right|.
\]
If $\cos \theta_j \leq 0$, then we also have
\[
\begin{aligned} 
{\rm Re \,}(e^{-i\theta_j}e_j^\top& \Delta_A q_\theta ) =  \sum_k (A_{jk}-p) \cos(\theta_k-\theta_j) \\
=& \sum_k A_{jk} \cos(\theta_k-\theta_j) - np \rho_1 \cos \theta_j
\geq np \rho_1 \left|\cos \theta_j\right|.
\end{aligned} 
\]
The inequality in the lemma follows by squaring the above inequalities, summing over $j$, and noting that $|e^{-i\theta_j}| = 1$.
\end{proof} 

To see how Lemma~\ref{lem:LowerBoundSin} can be used to derive a lower bound on $|\rho_2|^2$ for any stable equilibrium state, we start by dropping the second sum in Lemma~\ref{lem:LowerBoundSin}.  By using the upper bound $\|\Delta_A q_\theta \|^2\leq n\|\Delta_A\|^2$, we find that $\sum_j \sin^2 (\theta_j)\leq \|\Delta_A\|^2/(np^2\rho_1^2)$.
Since $n{\rm Re}\!\left(\rho_{2}\right) = \sum_j \cos(2\theta_j) = \sum_j \left(1 - 2\sin^2(\theta_j)\right)$, we have the following lower bound on $|\rho_2|$:
\begin{equation} 
|\rho_2|\geq {\rm Re}(\rho_2) = \frac{1}{n}\sum_j \left(1 - 2\sin^2(\theta_j)\right) \geq 1 - \frac{2 \|\Delta_A\|^2  }{n^2p^2\rho_1^2}.
\label{eq:LowerBoundRho2}
\end{equation} 
From~\eqref{eq:LowerBoundRho1} and~\eqref{eq:LowerBoundRho2}, we observe that when $\|\Delta_A\| \ll np$, then $\rho_1$ and $|\rho_2|$ must both be close to $1$. Intuitively, this should mean that the corresponding stable equilibrium, $\theta$, must be close to the all-in-phase state with the possible exception of a small number of stray oscillators. However, our goal is to prove global synchrony, which is a more stringent condition, and we must completely rule out the existence of stray oscillators. 

To precisely control the number of stray oscillators, we define a set of indices for oscillators whose phases lie outside of a sector of half-angle $\phi$ (centered about the all-in-phase state):   
\begin{equation} 
C_\phi = \left\{k : \cos(\theta_k) \leq \cos(\phi), \,\, 1\leq k\leq n \right\}
\label{eq:ImportantSet} 
\end{equation} 
for any angle $0\leq \phi \leq \pi$ (see Figure~\ref{fig:Cset}).  If we can prove that $C_{\pi/2}$ is empty, then we know that all the phases in the equilibrium state lie strictly inside a half-circle. That would give us what we want, because by a basic theorem for the homogeneous Kuramoto model, the only stable equilibrium $\theta$ with all its phases confined to a half-circle is the all-in-phase state.~\cite{jadbabaie2004stability, dorfler2014synchronization} In terms of bounds, since $|C_{\pi/2}|$ is integer-valued, if we can show that $|C_{\pi/2}|<1$ then we know that $C_{\pi/2}$ is the empty set.
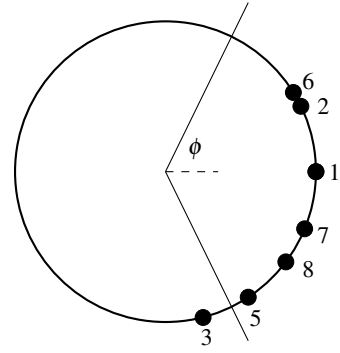
\begin{figure} 
\centering 
\begin{tikzpicture} 
\draw[thick,black] (0,0) circle [radius=2cm];
\filldraw[black] (2,0) circle (3pt);
\filldraw[black] (1.7,1.054) circle (3pt);
\filldraw[black] (1.8,0.87) circle (3pt);
\filldraw[black] (1.6,-1.2) circle (3pt);
\filldraw[black] (1.85,-0.76) circle (3pt);
\filldraw[black] (1.1,-1.67) circle (3pt);
\filldraw[black] (0.5,-1.94) circle (3pt);
\filldraw[black] (2,0) circle (3pt);
\draw[black] (0,0) -- (1.1,2.25);
\draw[black] (0,0) -- (1.1,-2.25);
\draw[black,dashed] (0,0) -- (.7,0);
\node at (2.25,0) {$1$};
\node at (2.1,0.87) {$2$};
\node at (0.55,-2.2) {$3$};
\node at (1.2,-1.95) {$5$};
\node at (1.9,1.2) {$6$};
\node at (2.1,-0.85) {$7$};
\node at (1.9,-1.3) {$8$};
\node at (.4,.3) {$\phi$};
\end{tikzpicture} 
\caption{When viewing the phases of the oscillators on the unit circle, the set $C_\phi$ contains the indices of stray oscillators whose phases have cosines less than or equal to $\cos(\phi)$. In the example shown here, we have $8$ oscillators, and only the oscillator with phase $\theta_3$ has strayed outside the sector of half-angle $\phi$. Hence $C_{\phi} = \left\{3\right\}$.}
\label{fig:Cset}
\end{figure} 

From Lemma~\ref{lem:LowerBoundSin} and $\|\Delta_A q_\theta \|^2\leq n\|\Delta_A\|^2$, we find that 
\begin{equation} 
n\|\Delta_A\|^2\geq n^2p^2\rho_1^2\sum_{j\in C_\phi}\sin^2(\theta_j)\geq n^2p^2\rho_1^2|C_\phi| \sin^2(\phi). 
\label{eq:FirstBound}
\end{equation} 
Therefore, by plugging in $\phi = \pi/2$, we see that $|C_{\pi/2}| \leq \|\Delta_A\|^2/(np^2\rho_1^2)$. Thus, if $\|\Delta_A\|^2 < np^2\rho_1^2$ then $C_{\pi/2}$ must be the empty set and the network is globally synchronizing. 

Unfortunately, this kind of reasoning is not sufficient to prove global synchrony for graphs of interest to us here. For example, for an \ER random graph, we know that $\|\Delta_A\|^2\approx 4p(1-p)n$ with high probability for large $n$ (see Section~\ref{sec:Erdos}). So the upper bound on $|C_{\pi/2}|$ is approximately $4(1-p)/(p\rho_1^2)$, which for $p<1/2$ is certainly not good enough to conclude that $C_{\pi/2}$ is empty.  Instead, we must further improve our bounds on $|C_{\pi/2}|$ by using a recursive refinement strategy that we refer to as an ``amplification" argument (see Section~\ref{sec:amplification}). 

\section{Bounds on the number of edges and sizes of sets in graphs}
The precise amplification argument that we use requires bounds on the number of edges and sizes of vertex sets of a graph expressed in terms of the spectral norms of $\Delta_A$ and $\Delta_L$. It is worth noting that these bounds hold for both deterministic and random graphs. For a vertex set $C$ of a graph $G$, we denote the characteristic vector by $v_C$, i.e., $(v_C)_j = 1$ if $j\in C$ and $(v_C)_j = 0$ if $j\not\in C$. We use $v_C^\top$ to denote the vector transpose of $v_C$. We write $|C|$ to be the number of vertices in $C$ and denote the number of edges in $G$ between two vertex sets $C$ and $C'$ as $E_{C,C'}$. For \ER graphs, one expects to have $E_{C,C'}\approx p |C| |C'|$. However, for our argument to work, expectations are not adequate; instead we need to have bounds on the difference between $E_{C,C'}$ and $p |C| |C'|$. The results in this section are proved by classical techniques and closely related bounds are well known.  We give the proofs of the precise inequalities that we need so that the paper is self-contained. 

We first show that for any vertex set $C$, the number of edges connecting a vertex in $C$ to another vertex in $C$ deviates from $p|C|^2$ by at most $\|\Delta_A\||C|$, for any $0\leq p\leq 1$.

\begin{lem}
Let $G$ be a graph of size $n$ with vertex set $V_G$ and adjacency matrix $A$. For any $0\leq p\leq 1$, we have
$$
\max_{C\subseteq V_G} \frac{\left| E_{C,C} - p |C|^2 \right|}{|C|} \leq \|\Delta_A\|,
$$
where $\Delta_A$ is defined in~\eqref{eq:DeltaNotation}.  
\label{lem:OnePartitioning}
\end{lem}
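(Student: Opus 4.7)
The plan is to express $E_{C,C}$ as a quadratic form in the adjacency matrix, split off the expected part using the decomposition $A = pJ_n + \Delta_A$, and control the remainder by the spectral norm of $\Delta_A$.

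First, I would write $E_{C,C} = v_C^\top A v_C$, where $v_C$ is the characteristic vector of $C$. (This uses the paper's convention that self-loops are allowed and that each undirected edge between distinct vertices contributes its two symmetric entries to $A$; under this convention $v_C^\top A v_C$ is exactly the quantity the lemma calls $E_{C,C}$.) Then I would substitute $A = pJ_n + \Delta_A$ from~\eqref{eq:DeltaNotation} to get
\[
E_{C,C} = p\, v_C^\top J_n v_C + v_C^\top \Delta_A v_C = p|C|^2 + v_C^\top \Delta_A v_C,
\]
since $v_C^\top J_n v_C = (\mathbf{1}^\top v_C)^2 = |C|^2$.

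Next I would bound the remainder term. Because $\Delta_A$ is symmetric, the Rayleigh quotient bound gives
\[
\bigl| v_C^\top \Delta_A v_C \bigr| \leq \|\Delta_A\|\, \|v_C\|^2 = \|\Delta_A\|\, |C|,
\]
using $\|v_C\|^2 = |C|$. Dividing by $|C|$ (assuming $C$ is nonempty; the case $C = \emptyset$ is trivial since both sides vanish) yields the claimed inequality for each $C$, and hence for the maximum over $C \subseteq V_G$.

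There is no real obstacle here — the proof is essentially just a Cauchy--Schwarz / spectral-norm estimate applied to the quadratic form $v_C^\top A v_C$. The only thing worth being careful about is the interpretation of $E_{C,C}$: one has to confirm that the normalization implicit in the statement matches $v_C^\top A v_C$ under the paper's convention that self-loops are permitted and that $A$ is the symmetric $\{0,1\}$-matrix (so an undirected edge between distinct vertices of $C$ contributes $2$ to the sum, and a self-loop contributes $1$). Once that bookkeeping is fixed, the argument is immediate.
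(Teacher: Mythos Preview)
Your proof is correct and follows essentially the same approach as the paper: express $E_{C,C}=v_C^\top A v_C$, split $A=pJ_n+\Delta_A$, use $v_C^\top J_n v_C=|C|^2$ and $v_C^\top v_C=|C|$, and bound the Rayleigh quotient of $\Delta_A$ by its spectral norm. If anything, your version is slightly tidier in that you explicitly keep the absolute value and note the trivial $C=\emptyset$ case.
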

\begin{proof}
Let $v_C$ be the characteristic vector for $C$. By the min-max theorem,~\cite{Golub} we have 
$\max_{C\subseteq V_G} (v_C^\top\Delta_A v_C)/(v_C^\top v_C) \leq \|\Delta_A\|$.
Finally, we note that $\Delta_A = A - pJ_n$, $v_C^\top v_C = |C|$, $v_C^\top J_nv_C = |C|^2$, and $v_C^\top A v_C = E_{C,C}$.
\end{proof} 

Lemma~\ref{lem:OnePartitioning} controls the number of edges connecting a set of vertices. The following result controls the number of edges leaving a vertex set. We denote the vertices of $G$ that are not in $C_1$ by $V_G\setminus C_1$. 
\begin{lem}
Let $G$ be a graph of size $n$ with vertex set $V_G$ and graph Laplacian $L$. For any $0\leq p\leq 1$, we have
$$
\max_{C_1\subseteq V_G,C_2 = V_G\setminus C_1} \frac{\left| E_{C_1,C_2} - p |C_1||C_2| \rule{0pt}{2ex}\right|}{|C_1| |C_2|} \leq \frac{\|\Delta_L\|}{n}, 
$$
where $\Delta_L$ is defined in~\eqref{eq:DeltaNotation}. 
\label{lem:TwoPartitioning}
\end{lem}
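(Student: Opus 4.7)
The plan is to mimic the test-vector strategy of Lemma~\ref{lem:OnePartitioning}, but with a more carefully chosen vector $w$ lying in the orthogonal complement of $\mathbf{1}$. The point is that if $J_n w = 0$, the $pJ_n$ piece of $\Delta_L$ drops out of the Rayleigh quotient $w^\top \Delta_L w / (w^\top w)$, and what remains is a combination of $w^\top L w$ and $w^\top w$ that, for the right $w$, produces exactly $E_{C_1,C_2} - p|C_1||C_2|$ up to sign.

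Concretely, I would take $w = |C_2|\, v_{C_1} - |C_1|\, v_{C_2}$, where $v_{C_i}$ is the characteristic vector of $C_i$. Since $C_1$ and $C_2$ partition $V_G$, we have $w^\top \mathbf{1} = |C_2||C_1| - |C_1||C_2| = 0$, so $J_n w = 0$, and a direct calculation gives $w^\top w = |C_2|^2|C_1| + |C_1|^2|C_2| = n|C_1||C_2|$. I would next evaluate $w^\top L w$ via the Laplacian identity $v^\top L v = \sum_{i<j} A_{ij}(v_i - v_j)^2$ (self-loop terms drop out automatically). Since $w_i - w_j = 0$ whenever $i,j$ lie in the same part and $|w_i - w_j| = |C_1|+|C_2| = n$ across the partition, this yields $w^\top L w = n^2 E_{C_1,C_2}$. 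Assembling the pieces, and using that the spectral norm is insensitive to overall sign, the Rayleigh quotient satisfies
\[
\left|\frac{w^\top \Delta_L w}{w^\top w}\right| = \frac{n\bigl|E_{C_1,C_2} - p|C_1||C_2|\bigr|}{|C_1||C_2|},
\]
and the min-max theorem bounds the left-hand side by $\|\Delta_L\|$. Rearranging gives the stated inequality.

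The main point of care, rather than a real obstacle, is the choice of test vector. If one instead used the single indicator $v_{C_1}$, the correct combination $E_{C_1,C_2} - p|C_1||C_2|$ would still appear in the numerator of the Rayleigh quotient, but it would be normalized only by $v_{C_1}^\top v_{C_1} = |C_1|$, giving the weaker bound $|E_{C_1,C_2} - p|C_1||C_2|| \leq \|\Delta_L\|\,|C_1|$. Orthogonalizing against $\mathbf{1}$ before applying min-max is precisely what buys the extra factor $|C_2|/n$ that the lemma claims.
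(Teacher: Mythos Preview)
Your proposal is correct and essentially the same as the paper's proof: both use the test vector $w = |C_2|\,v_{C_1} - |C_1|\,v_{C_2}$ (the paper scales it by $1/n$, which is immaterial for a Rayleigh quotient) and then read off $w^\top L w$, $w^\top w$, and the vanishing of $J_n w$ before invoking the min--max theorem. Your write-up is in fact slightly more explicit than the paper's---you spell out the Laplacian quadratic form $\sum_{i<j}A_{ij}(v_i-v_j)^2$, and you take care to bound $|w^\top \Delta_L w|/w^\top w$ rather than just $w^\top \Delta_L w/w^\top w$, which is what is needed for the two-sided estimate in the lemma.
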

\begin{proof}
Let $v_{C_1}$ and $v_{C_2}$ be the characteristic vectors for the sets $C_1$ and $C_2$, respectively, and $w = (|C_2|/n) v_{C_1} - (|C_1|/n) v_{C_2}$. By the min-max theorem, we have 
\[
\max_{C_1\subseteq V_G,C_2 = V_G\setminus C_1} \!\! \frac{w^\top\Delta_L w}{w^\top w}\leq \|\Delta_L\|.
\] 
Finally, we note that $\Delta_L = L - pJ_n + npI_n$, $w^\top w = \frac{|C_2|^2|C_1|}{n^2} + \frac{|C_2||C_1|^2}{n^2} =\frac{|C_2||C_1|}{n}$ as $|C_1| + |C_2| = n$, $w^\top (-pJ_n + npI_n)w = p|C_1||C_2|$, and $w^\top L w = E_{C_1,C_2}$. 
\end{proof}

When a bound on $\|\Delta_L\|$ is available, Lemma~\ref{lem:TwoPartitioning} can be used to ensure that $G$ is connected. 
In particular, Lemma~\ref{lem:TwoPartitioning} tells us that a graph of size $n$ is connected if $\|\Delta_L\|<np$ for some $0\leq p\leq 1$. 

Since $0\leq E_{C_1,C_2}\leq |C_1||C_2|$, we know that $\left| E_{C_1,C_2} - p |C_1||C_2|\right| \leq \max\{p,1-p\}|C_1||C_2|$.  Therefore, Lemma~\ref{lem:TwoPartitioning} is only a useful bound when $\|\Delta_L\|\ll np$. We can take Lemma~\ref{lem:TwoPartitioning} a step further and bound the number of edges between \emph{any} two sets of vertices of $G$ using $\|\Delta_L\|$. We denote the union of the vertex sets $C_1$ and $C_2$ by $C_1\sqcup C_2$. 

\begin{lem}
Let $G$ be a graph of size $n$ with vertex set $V_G$ and graph Laplacian matrix $L$. For any $0\leq p\leq 1$, we have
$$
\max_{\substack{C_1\subseteq V_G,C_2\subseteq V_G\setminus C_1,\\C_3 = V_G\setminus (C_1\sqcup C_2)}}  
\frac{\left| E_{C_1,C_2} - p |C_1||C_2| \rule{0pt}{2ex} \right|}{|C_1| |C_2| + |C_1| |C_3| + |C_2| |C_3|} \leq \frac{\|\Delta_L\|}{n},
$$
where $\Delta_L$ is defined in~\eqref{eq:DeltaNotation}. 
\label{lem:ThreePartitioning}
\end{lem}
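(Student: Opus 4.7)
My plan is to derive Lemma~\ref{lem:ThreePartitioning} from Lemma~\ref{lem:TwoPartitioning} by applying the two-partition bound three times and combining the results via an algebraic identity, rather than by invoking the min-max theorem directly. Throughout, write $n_i = |C_i|$ for $i=1,2,3$ and let $e_{ij} = E_{C_i,C_j} - p\, n_i n_j$ denote the signed discrepancy between the actual and expected number of edges across a pair of vertex sets.

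First, I would observe that each of the three pairs $(C_1,\, C_2\sqcup C_3)$, $(C_2,\, C_1\sqcup C_3)$, and $(C_1\sqcup C_2,\, C_3)$ is a genuine bipartition of $V_G$ of the kind handled by Lemma~\ref{lem:TwoPartitioning}. Applying that lemma to each pair in turn, and using the additivity $E_{C_i,\, C_j\sqcup C_k} = E_{C_i,C_j} + E_{C_i,C_k}$ for disjoint $C_j,C_k$, produces the three bounds
\begin{align*}
|e_{12} + e_{13}| & \leq \frac{\|\Delta_L\|}{n}\bigl(n_1 n_2 + n_1 n_3\bigr), \\
|e_{12} + e_{23}| & \leq \frac{\|\Delta_L\|}{n}\bigl(n_1 n_2 + n_2 n_3\bigr), \\
|e_{13} + e_{23}| & \leq \frac{\|\Delta_L\|}{n}\bigl(n_1 n_3 + n_2 n_3\bigr).
\end{align*}

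Next, I would exploit the elementary identity $2\, e_{12} = (e_{12}+e_{13}) + (e_{12}+e_{23}) - (e_{13}+e_{23})$. Applying the triangle inequality to this identity, substituting the three bounds above, summing the right-hand sides, and dividing by $2$ yields exactly $|e_{12}| \leq (\|\Delta_L\|/n)\bigl(n_1 n_2 + n_1 n_3 + n_2 n_3\bigr)$, which is the claimed inequality. Degenerate cases where one of the $C_i$ is empty either reduce the statement to Lemma~\ref{lem:TwoPartitioning} (if $C_3=\varnothing$) or make the inequality trivially $0 \leq 0$ (if $C_1$ or $C_2$ is empty), so no separate treatment is needed.

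The argument has no real technical obstacle; the only creative step is spotting the right algebraic combination, and the rest is bookkeeping. It is worth remarking why a direct min-max attempt with a test vector $w = \alpha v_{C_1} + \beta v_{C_2} + \gamma v_{C_3}$ does not appear to give the bound in one shot: a short computation using $\Delta_L = L - pJ_n + npI_n$ shows that $w^{\top}\Delta_L w$ decomposes as $(\alpha-\beta)^2 e_{12} + (\alpha-\gamma)^2 e_{13} + (\beta-\gamma)^2 e_{23}$, and one cannot simultaneously zero out two of these squared differences while keeping the third nonzero. Hence the combinatorial cancellation above appears to be essential in isolating $e_{12}$.
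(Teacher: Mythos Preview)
Your proposal is correct and is essentially identical to the paper's own proof: the paper uses the same identity $2E_{C_1,C_2} = E_{C_1,C_2\sqcup C_3} + E_{C_2,C_1\sqcup C_3} - E_{C_3,C_1\sqcup C_2}$, applies Lemma~\ref{lem:TwoPartitioning} to each of the three bipartitions, and combines via the triangle inequality. Your formulation in terms of the centered discrepancies $e_{ij}$ is just a cosmetic repackaging of the same computation.
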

\begin{proof}
For any partitioning of the vertices of $G$ into three sets $C_1$, $C_2$, and $C_3$, we have $2E_{C_1,C_2} = E_{C_1,C_2\sqcup C_3} + E_{C_2,C_1\sqcup C_3} - E_{C_3,C_1\sqcup C_2}$. By Lemma~\ref{lem:TwoPartitioning}, $E_{C_1,C_2\sqcup C_3}$ is bounded between $p(|C_1||C_2|+|C_1||C_3|) \pm \|\Delta_L\||C_1|(|C_2|+|C_3|)/n$, $E_{C_2,C_1\sqcup C_3}$ is bounded between $p(|C_2||C_1|+|C_2||C_3|) \pm \|\Delta_L\||C_2|(|C_1|+|C_3|)/n$, and $E_{C_3,C_1\sqcup C_2}$ is bounded between $p(|C_3||C_1|+|C_3||C_2|) \pm \|\Delta_L\||C_3|(|C_1|+|C_2|)/n$.  Hence, $E_{C_1,C_2}$ deviates from $p|C_1||C_2|$ by less than $\|\Delta_L\|(|C_1| |C_2|+|C_2| |C_3|+|C_1| |C_3|)/(2n)$. 
\end{proof} 

Lemma~\ref{lem:OnePartitioning} and Lemma~\ref{lem:ThreePartitioning} can be combined to obtain the following result. 
\begin{theorem}
\label{thm:relativesize}
Let $G$ be a graph of size $n$ with adjacency matrix $A$ and graph Laplacian matrix $L$. Suppose that $C_1$, $C_2$, and $C_3$ is a partition of the vertices of $G$ into three sets such that (i) $E_{C_1,C_3}\leq \lambda E_{C_3,C_3}$ for some number $\lambda$ and (ii) $|C_2|<|C_1|$.
Then, for any $0\leq p\leq 1$, we have
\[
|C_2|+ |C_3|  \geq \left( \frac{ n  \left( p|C_1| - p\lambda |C_3|  - \lambda \|\Delta_A\| \right)} {\|\Delta_L\| |C_1| }   - 1 \right) |C_3|, 
\]
where $\Delta_A$ and $\Delta_L$ are defined in~\eqref{eq:DeltaNotation}.
\end{theorem}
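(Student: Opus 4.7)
The plan is to combine Lemma~\ref{lem:ThreePartitioning} and Lemma~\ref{lem:OnePartitioning} through the chain of inequalities supplied by hypothesis~(i). Applying Lemma~\ref{lem:ThreePartitioning} to the partition $\{C_1,C_2,C_3\}$ (noting that the denominator in that lemma is symmetric in the three blocks, so we may place the edge count $E_{C_1,C_3}$ in the numerator) gives the lower bound
\[
E_{C_1,C_3} \geq p|C_1||C_3| - \frac{\|\Delta_L\|}{n}\bigl(|C_1||C_2|+|C_1||C_3|+|C_2||C_3|\bigr),
\]
while Lemma~\ref{lem:OnePartitioning} applied to the set $C_3$ gives
\[
E_{C_3,C_3} \leq p|C_3|^2 + \|\Delta_A\||C_3|.
\]

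Next I would chain these via hypothesis~(i), $E_{C_1,C_3}\leq \lambda E_{C_3,C_3}$, divide by $|C_3|$ (the case $|C_3|=0$ makes the conclusion trivial), and rearrange so that the terms carrying $p$ and $\|\Delta_A\|$ sit on the left. This produces
\[
p|C_1|-\lambda p|C_3|-\lambda\|\Delta_A\| \leq \frac{\|\Delta_L\|}{n}\left(|C_1|+\frac{|C_1||C_2|}{|C_3|}+|C_2|\right).
\]

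Hypothesis~(ii) now enters: since $|C_2|<|C_1|$, the $|C_2|$ summand on the right is at most $|C_1|$, so the right-hand side is bounded by $\frac{\|\Delta_L\||C_1|}{n}\cdot\frac{|C_2|+2|C_3|}{|C_3|}$. Dividing by $\|\Delta_L\||C_1|/n$, multiplying by $|C_3|$, and finally writing $|C_2|+2|C_3|=(|C_2|+|C_3|)+|C_3|$ yields the claimed inequality.

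I do not anticipate any substantial obstacle; the proof is really just the correct orchestration of the two preceding edge-count lemmas with the two hypotheses. The only steps that require care are (a) selecting the \emph{lower} bound from Lemma~\ref{lem:ThreePartitioning} and the \emph{upper} bound from Lemma~\ref{lem:OnePartitioning} so that the direction of $E_{C_1,C_3}\leq \lambda E_{C_3,C_3}$ is preserved through the chaining, and (b) applying $|C_2|<|C_1|$ at exactly the right moment to convert the asymmetric expression $|C_1|+|C_1||C_2|/|C_3|+|C_2|$ into something proportional to $|C_1|$, which is ultimately what places $|C_1|$ in the denominator of the stated bound.
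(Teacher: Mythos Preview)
Your proposal is correct and essentially identical to the paper's own proof: both combine the lower bound on $E_{C_1,C_3}$ from Lemma~\ref{lem:ThreePartitioning} with the upper bound on $E_{C_3,C_3}$ from Lemma~\ref{lem:OnePartitioning} via hypothesis~(i), divide through by $|C_3|$, and then invoke $|C_2|<|C_1|$ to reach the stated inequality. The only cosmetic difference is that the paper retains the term $|C_2|/|C_1|$ explicitly until the final line before replacing it by $1$, whereas you absorb $|C_2|\le|C_1|$ one step earlier; the resulting bound is the same.
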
 
\begin{proof}
By Lemma~\ref{lem:OnePartitioning}, $E_{C_3,C_3}$ deviates from $p|C_3|^2$ by less than $\|\Delta_A\| |C_3|$ and, by Lemma~\ref{lem:ThreePartitioning}, $E_{C_1,C_3}$ deviates from $p|C_1||C_3|$ by less than $\|\Delta_L\|(|C_1||C_2|+|C_2||C_3|+|C_3||C_1|)/n$. Since $E_{C_1,C_3}\leq \lambda E_{C_3,C_3}$, we must have 
\[
\begin{aligned} 
p|C_1||C_3| - & \frac{\|\Delta_L\|}{n}(|C_1||C_2|+|C_2||C_3|+|C_3||C_1|) \\ & \qquad \qquad \leq \lambda\left(p|C_3|^2 + \|\Delta_A\||C_3|\right).
\end{aligned} 
\]
By rearranging this inequality, we find that
\[
|C_2|+ |C_3|  \geq \left(\frac{n\left(p|C_1| - p\lambda|C_3| - \lambda\|\Delta_A\|\right)}{\|\Delta_L\||C_1|}-\frac{|C_2|}{|C_1|}\right)|C_3|.  
\]
The result follows as $|C_2|<|C_1|$. 
\end{proof}

\section{Amplification argument}\label{sec:amplification} 
We are finally ready for our amplification argument, which is a way to improve the bounds on $|C_{\pi/2}|$ from~\eqref{eq:FirstBound}. We first write down a new inequality that holds for any stable equilibrium. We write it down using a kernel function $K$ that later allows us to improve our argument with the aid of a computer (see Section~\ref{sec:compOpt}). 
\begin{lem} 
\label{lem:int_inequality}
Let $G$ be a graph with adjacency matrix $A$. Let $K$ be a kernel function defined on $[-\pi,\pi)\times [-\pi,\pi)$, given by 
\[
K(\alpha, \beta) = 
\begin{cases} 
\sin (|\alpha| -|\beta|), & |\alpha|, |\beta| \leq \frac{\pi}{2}, \\ 
-\cos(\alpha), 
     &  |\alpha| \leq \frac{\pi}{2}, |\beta| > \frac{\pi}{2},\\
1, &  |\alpha| > \frac{\pi}{2}.
\end{cases}
\]
Then any stable equilibrium $\theta$  of~\eqref{eq:dynamical} must satisfy $\sum_j A_{jk} K(\theta_j, \theta_k) \geq 0$ for any $1\leq k\leq n$. 
\end{lem}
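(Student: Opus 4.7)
The plan is, for each fixed $k$, to find constants $\lambda_k\in\mathbb{R}$ and $\mu_k\geq 0$ (depending only on $\theta_k$) such that the pointwise bound
\[
K(\theta_j,\theta_k) \;\geq\; \lambda_k\sin(\theta_j-\theta_k) + \mu_k\cos(\theta_j-\theta_k)
\]
holds for every $\theta_j\in[-\pi,\pi)$. Once this is in hand, summing against $A_{jk}$ and using the equilibrium identity $\sum_j A_{jk}\sin(\theta_j-\theta_k)=0$ together with the diagonal-Hessian stability inequality $\sum_j A_{jk}\cos(\theta_j-\theta_k)\geq 0$ (both already recalled in the proof of Lemma~\ref{lem:LowerBoundSin}) immediately gives $\sum_j A_{jk}K(\theta_j,\theta_k)\geq 0$, as desired. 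Because $K$ is even in each of its arguments and both oscillator conditions are preserved under the global sign flip $\theta\mapsto -\theta$, I will also assume without loss of generality that $\theta_k\geq 0$.

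The non-stray case $\theta_k\in[0,\pi/2]$ I would handle by taking $\lambda_k=1$, $\mu_k=0$, and checking the pointwise bound $K(\theta_j,\theta_k)\geq\sin(\theta_j-\theta_k)$ piecewise on the three regions defining $K$. It is an equality on $\theta_j\in[0,\pi/2]$; on $\theta_j\in[-\pi/2,0]$ the sum-to-product identity gives $\sin(-\theta_j-\theta_k)-\sin(\theta_j-\theta_k)=-2\cos(\theta_k)\sin(\theta_j)\geq 0$; and on $|\theta_j|>\pi/2$ the bound is trivial since $K=1$. Here only the equilibrium identity is needed when summing.

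The stray case $\theta_k\in(\pi/2,\pi)$ is where the statement is more subtle, since $K$ is pieced together without any explicit reference to $\theta_k$ on two of its three regions. The trick is the addition-formula identity
\[
\sin(\theta_k)\sin(\theta_j-\theta_k)-\cos(\theta_k)\cos(\theta_j-\theta_k) \;=\; -\cos(\theta_j),
\]
which suggests the choice $\lambda_k=\sin\theta_k$, $\mu_k=-\cos\theta_k$; critically $\mu_k>0$ precisely because $\theta_k$ is stray. It then suffices to verify $K(\theta_j,\theta_k)\geq -\cos(\theta_j)$ for all $\theta_j$, which is immediate: equality on $|\theta_j|\leq\pi/2$ by the very definition of $K$ there, and clear on $|\theta_j|>\pi/2$, where $K=1$ while $-\cos(\theta_j)\in[0,1]$. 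The only real obstacle is spotting this rewrite of $-\cos\theta_j$; once the identity is noticed, the sign of $\mu_k$ matches stray-ness of $\theta_k$ automatically, and the stability inequality is what actually powers the stray case, in contrast to the non-stray case, which uses only the equilibrium condition.
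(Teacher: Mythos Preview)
Your proof is correct and follows essentially the same approach as the paper: the same pointwise bounds $K(\theta_j,\theta_k)\geq \sin(\theta_j-\theta_k)$ in the non-stray case and $K(\theta_j,\theta_k)\geq -\cos\theta_j$ in the stray case, combined with the equilibrium and diagonal-Hessian conditions. Your presentation is somewhat cleaner---the unified $(\lambda_k,\mu_k)$ framework and the global sign-flip to reduce to $\theta_k\geq 0$ replace the paper's separate Cases 1 and 2, and your addition-formula rewrite in the stray case streamlines the paper's more roundabout derivation of $\sum_j A_{jk}\cos\theta_j\leq 0$---but the mathematical content is the same.
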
 
\begin{proof} 
Let $k$ be an integer between $1$ and $n$. Due to periodicity, we may assume that the phases, $\theta_j$, take values in the interval $[-\pi,\pi)$. We split the proof into three cases depending on the value of $\theta_k$. 

{\bf Case 1: $0\leq \theta_k\leq \pi/2$.} 
We first show that $\sin(\theta_j-\theta_k)\leq K(\theta_j,\theta_k)$ for all $j$ by checking the three possible subcases: (i) If $0\leq \theta_j\leq \pi/2$ then $\sin(\theta_j-\theta_k) = \sin(|\theta_j|-|\theta_k|) = K(\theta_j,\theta_k)$; (ii) If $|\theta_j|>\pi/2$ then $\sin(\theta_j-\theta_k) \leq 1 = K(\theta_j,\theta_k)$; and (iii) If $-\pi/2 \leq \theta_j<0$ then $\sin(\theta_j-\theta_k) = \sin(|\theta_j|-|\theta_k|-2|\theta_j|)\leq \sin(|\theta_j|-|\theta_k|) = K(\theta_j,\theta_k)$, where the inequality holds because $-\pi/2\leq |\theta_j|-|\theta_k|\leq \pi/2$ and $0\leq 2|\theta_j|\leq \pi$. The inequality follows from the equilibrium condition  $\sum_jA_{jk}\sin(\theta_j-\theta_k) = 0$. 

{\bf Case 2: $ - \pi/2 \leq  \theta_k< 0$.} 
We first show that $\sin(\theta_j-\theta_k)\geq -K(\theta_j,\theta_k)$ for all $j$ by checking the three possible subcases: (i) If $0\leq \theta_j\leq \pi/2$ then $\sin(\theta_j-\theta_k) = \sin(|\theta_j|-|\theta_k|+2|\theta_k|) \geq -\sin(|\theta_j|-|\theta_k|) = -K(\theta_j,\theta_k)$, where the inequality holds because $-\pi/2\leq |\theta_j|-|\theta_k|\leq \pi/2$ and $0\leq 2|\theta_k|\leq \pi$; (ii) If $|\theta_j|>\pi/2$ then $\sin(\theta_j-\theta_k) \geq -1 = -K(\theta_j,\theta_k)$; and (iii) If $-\pi/2 \leq \theta_j<0$ then $\sin(\theta_j-\theta_k) = \sin(-|\theta_j|+|\theta_k|) = -\sin(|\theta_j|-|\theta_k|) = -K(\theta_j,\theta_k)$. The inequality follows from the equilibrium condition  $\sum_jA_{jk}\sin(\theta_j-\theta_k) = 0$. 

{\bf Case 3: $ |\theta_k| > \pi/2$.} 
From the fact that $\theta$ is an equilibrium, we have $\sum_j A_{jk} \sin(\theta_k -\theta_j) = 0$. Moreover,   $\sum_j A_{jk} \cos(\theta_k-\theta_j) \geq 0$, because the diagonal entries of the Hessian matrix must be nonnegative at a {\em stable} equilbrium (see (2.3) of Ling {\em et al.}~\cite{ling2018landscape}). 
By a trigonometric identity, we find that $0 = \sum_k A_{jk} \sin(\theta_k -\theta_j) = \sin(\theta_k)\sum_jA_{jk}\cos(\theta_j) + \cos(\theta_k)\sum_jA_{jk}\sin(\theta_j)$ and hence, 
\begin{equation} 
\sum_jA_{jk}\sin(\theta_j) = \frac{\sin(\theta_k)}{\cos(\theta_k)}\sum_jA_{jk}\cos(\theta_j),
\label{eq:sinExpand} 
\end{equation} 
where we note that $\cos(\theta_k)\neq 0$ as $\theta_k\neq\pm\pi/2$. Moreover, from another trigonometric identity, we have $0\leq\sum_j A_{jk} \cos(\theta_k-\theta_j) = \cos(\theta_k)\sum_jA_{jk}\cos(\theta_j) + \sin(\theta_k)\sum_jA_{jk}\sin(\theta_j)$, which together with~\eqref{eq:sinExpand} gives 
\[
\left(\cos(\theta_k)+ \frac{\sin^2(\theta_k)}{\cos(\theta_k)}\right)\sum_jA_{jk}\cos(\theta_j)\geq 0. 
\]
Multiplying this inequality by $\cos(\theta_k)$ (note that $\cos(\theta_k)< 0$ as $|\theta_k| > \pi/2$) and using $\cos^2(\theta_k) + \sin^2(\theta_k) = 1$, we conclude that $\sum_j A_{jk} \cos(\theta _j)\leq 0$. To reach the desired inequality in the statement of the lemma, we now check the two possible subcases: (i) If $|\theta_j|\leq \pi/2$ then $\cos(\theta_j) = -K(\theta_j,\theta_k)$ and (ii) If $|\theta_j| > \pi/2$ then $\cos(\theta_j) \geq -1 = -K(\theta_j,\theta_k)$. This means that $0\geq\sum_j A_{jk} \cos(\theta _j)\geq -\sum_j A_{jk} K(\theta _j,\theta_k)$ as desired.
\end{proof} 

In preliminary work we have found indications that Lemma~\ref{lem:int_inequality} can be used to prove stronger results than those we report below; see Section~\ref{sec:compOpt} for further discussion. But we are not sure yet how to write down an argument that uses the full strength of Lemma~\ref{lem:int_inequality} in a readily digestible fashion. So for now we use the following simplified lemma instead. It is a key step in our amplification argument. Ultimately it leads to a global synchronization theorem whose proof is comparatively straightforward.   

\begin{lem}
Let $G$ be a graph and $\theta$ a stable equilibrium of~\eqref{eq:dynamical}, and let $0 < \alpha < \beta < \pi/2$. We have 
\[
E_{C_\beta,C_\beta} \geq \sin(\beta-\alpha)E_{C_\beta,\overline{C_\alpha}},
\]
where $C_\alpha$ and $C_\beta$ are defined in~\eqref{eq:ImportantSet}. Here, $\overline{C_\alpha} = V_G \setminus C_\alpha$.
\label{lem:Edges}
\end{lem}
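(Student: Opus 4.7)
The plan is to sum the pointwise stability inequality $\sum_j A_{jk} K(\theta_j, \theta_k) \geq 0$ from Lemma~\ref{lem:int_inequality} over all $k \in C_\beta$, then show that a suitable pointwise upper bound on $K$ reproduces exactly the quantity $E_{C_\beta, C_\beta} - \sin(\beta - \alpha)\, E_{C_\beta, \overline{C_\alpha}}$, from which the claim falls out.

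First, I would partition the summation variable $j$ into three zones determined by the magnitude of $\theta_j$: $j \in \overline{C_\alpha}$ (so $|\theta_j| < \alpha$), $j \in C_\alpha \setminus C_\beta$ (so $\alpha \leq |\theta_j| < \beta$), and $j \in C_\beta$ (so $|\theta_j| \geq \beta$). For each fixed $k \in C_\beta$, which means $|\theta_k| \geq \beta$, I would further split into the sub-cases $|\theta_k| \leq \pi/2$ and $|\theta_k| > \pi/2$, since $K(\cdot,\theta_k)$ changes its functional form across that boundary.

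The core of the argument is a zone-by-zone upper bound on $K(\theta_j,\theta_k)$. For $j \in \overline{C_\alpha}$: if $|\theta_k| \leq \pi/2$ then $K = \sin(|\theta_j| - |\theta_k|) \leq \sin(\alpha - \beta) = -\sin(\beta-\alpha)$, because $|\theta_j| - |\theta_k| \in [-\pi/2, \alpha-\beta]$ and $\sin$ is increasing there; if $|\theta_k| > \pi/2$ then $K = -\cos(\theta_j) \leq -\cos(\alpha) \leq -\sin(\beta-\alpha)$, where the last step uses $\sin(\beta - \alpha) \leq \sin(\pi/2 - \alpha) = \cos(\alpha)$, valid because $\beta < \pi/2$. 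For $j \in C_\alpha \setminus C_\beta$, a short inspection of both sub-cases yields $K \leq 0$ (the relevant quantity is either $\sin$ of a negative number in $[-\pi,0]$ or $-\cos$ of an angle with cosine positive). For $j \in C_\beta$, the trivial bound $K \leq 1$ is all that is needed.

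Assembling these estimates, for every $k \in C_\beta$,
\[
0 \leq \sum_j A_{jk} K(\theta_j, \theta_k) \leq -\sin(\beta-\alpha)\sum_{j \in \overline{C_\alpha}} A_{jk} + \sum_{j \in C_\beta} A_{jk},
\]
and summing over $k \in C_\beta$ immediately produces $E_{C_\beta,C_\beta} \geq \sin(\beta-\alpha)\, E_{C_\beta,\overline{C_\alpha}}$. The main obstacle is really just the bookkeeping: the kernel $K$ has three branches, and every combination of $|\theta_j|$-zone and $|\theta_k|$-sub-case must be checked so that the zone-wise bounds hold uniformly in $k$. The only nontrivial trigonometric input is the comparison $\cos(\alpha) \geq \sin(\beta-\alpha)$, which supplies the correct coefficient $-\sin(\beta-\alpha)$ in the $|\theta_k|>\pi/2$ sub-case and makes the two sub-cases combine cleanly.
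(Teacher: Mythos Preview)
Your proposal is correct and follows essentially the same approach as the paper: both sum the inequality of Lemma~\ref{lem:int_inequality} over $k\in C_\beta$, partition $j$ into the three zones $\overline{C_\alpha}$, $C_\alpha\setminus C_\beta$, $C_\beta$, and bound $K(\theta_j,\theta_k)$ in each zone by $-\sin(\beta-\alpha)$, $0$, and $1$ respectively, with the same trigonometric observation $\cos(\alpha)\geq\sin(\beta-\alpha)$ handling the $|\theta_k|>\pi/2$ sub-case. The only (harmless) imprecision is that for $j\in C_\alpha\setminus C_\beta$ with $|\theta_k|\leq\pi/2$ the argument $|\theta_j|-|\theta_k|$ actually lies in $[-\pi/2,0]$ rather than $[-\pi,0]$, but $\sin$ is nonpositive on either interval so the conclusion is unaffected.
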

\begin{proof}
This follows from the previous lemma by carefully bounding $K(\theta_j,\theta_k)$ when $k\in C_\beta$ (which implies that $|\theta_k|>\beta$). We check the three possible subcases: (i) If $j\in C_\beta$ then we might have $|\theta_j|>\pi/2$ so the best we can say is $K(\theta_j,\theta_k)\leq 1$; (ii) If $j\in C_\alpha \setminus C_\beta$ (which implies that $|\theta_j|\leq \pi/2$) then either $|\theta_k|>\pi/2$ so that $K(\theta_j,\theta_k) = -\cos(\theta_j)\leq 0$ or $\beta<|\theta_k|\leq \pi/2$ so that $K(\theta_j,\theta_k) = \sin(|\theta_j|-|\theta_k|)\leq 0$ as $|\theta_j|\leq |\theta_k|$. Either way, we have $K(\theta_j,\theta_k)\leq 0$ when $j\in C_\alpha \setminus C_\beta$; and (iii) If $j \not \in C_\alpha$ (which implies that $|\theta_j|< \alpha$) then either $|\theta_k|>\pi/2$ so that $K(\theta_j,\theta_k) = -\cos(\theta_j)\leq -\cos(\alpha) = -\sin(\pi/2-\alpha) \leq -\sin(\beta-\alpha)$ or $\beta<|\theta_k|\leq \pi/2$ so that $K(\theta_j,\theta_k) = \sin(|\theta_j|-|\theta_k|)\leq -\sin(\beta-\alpha)$. Either way, $K(\theta_j,\theta_k)\leq-\sin(\beta-\alpha)$ when $j\not\in C_\alpha$. We conclude that 
\[
K(\theta_j, \theta_k) \leq
\begin{cases}
1, & j \in C_\beta, \\
0, & j \in C_\alpha \setminus C_\beta, \\
-\sin(\beta - \alpha), & j \not \in C_\alpha
\end{cases}
\]
and hence, by Lemma~\ref{lem:int_inequality}, we have
\[
0\leq \!\sum_{k\in C_\beta} \!\sum_j A_{jk} K(\theta_j,\theta_k) \leq  \! \!\underbrace{\sum_{j,k\in C_{\beta}}  \! \!A_{jk}}_{=E_{C_\beta,C_\beta}} - \sin(\beta-\alpha)  \! \!\underbrace{\sum_{j\not\in C_{\alpha},k\in C_\beta}  \! \!A_{jk}}_{=E_{C_\beta,\overline{C_\alpha}}}
\]
as desired.
\end{proof}

We now show that if $\theta$ is a stable equilibrium and $|C_\alpha|$ is small, then $|C_\beta|$ must be even smaller for $0<\alpha<\beta<\pi/2$; otherwise, the oscillators in the set $C_\alpha\setminus C_\beta$ would destabilize the equilibrium (see Figure~\ref{fig:CsetEdges}). Since we know that $|C_\beta|\leq |C_\alpha|$, the next bound is useful when $\|\Delta_L\|/(np) > 1/4$.  
\begin{figure} 
\centering 
\begin{tikzpicture} 
\draw[thick,black] (0,0) circle [radius=2cm];
\filldraw[black] (2,0) circle (3pt);
\filldraw[black] (1.7,1.054) circle (3pt);
\filldraw[black] (1.8,0.87) circle (3pt);
\filldraw[black] (1.6,-1.2) circle (3pt);
\filldraw[black] (1.85,-0.76) circle (3pt);
\filldraw[black] (1.1,-1.67) circle (3pt);
\filldraw[black] (0.5,-1.94) circle (3pt);
\filldraw[black] (2,0) circle (3pt);
\draw[black,dashed] (0,0) -- (.7,0);
\draw[black] (0,0) -- (2.2,1.19);
\draw[black] (0,0) -- (2.2,-1.19);
\draw[black] (0,0) -- (1.5,2);
\draw[black] (0,0) -- (1.5,-2);
\node at (2.25,0) {$1$};
\node at (2.1,0.87) {$2$};
\node at (0.55,-2.2) {$3$};
\node at (1.2,-1.95) {$5$};
\node at (1.9,1.2) {$6$};
\node at (2.1,-0.85) {$7$};
\node at (1.9,-1.3) {$8$};
\node at (.5,.14) {$\alpha$};
\node at (.37,-.2) {$\beta$};
\end{tikzpicture} 
\caption{Here, $C_\alpha = \left\{3,5,6,8\right\}$ and $C_\beta = \left\{3,5\right\}$. We illustrate a hypothetical equilibrium such that $C_{\pi/2}$ is empty and so must be unstable; however, it might be that~\eqref{eq:FirstBound} is not tight enough to show that $|C_{\pi/2}|<1$. To still conclude that $C_{\pi/2}$ is empty, we first show in Lemma~\ref{lem:Edges} that for an equilibrium to be stable there must be enough edges coupling the oscillators in $C_\beta$ together, compared to those between $C_\beta$ and $\overline{C_\alpha}$. For the illustrated equilibrium, we are comparing the number of internal edges between oscillators $3$ and $5$ and the number of outgoing edges to oscillators $1,2$, and $7$.} 
\label{fig:CsetEdges}
\end{figure}
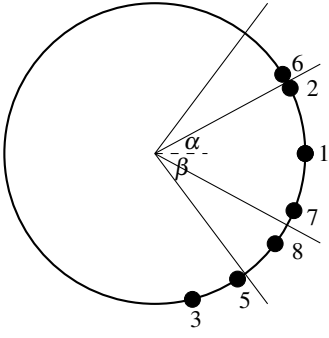 

\begin{cor}
\label{co:improvemnt}
Let $G$ be a graph of size $n$, $\theta$ a stable equilibrium of~\eqref{eq:dynamical}, and $0< p\leq 1$. If for some $0< \alpha < \beta < \pi/2$ we have $|C_\alpha| \leq n/2$, $|C_{\beta} | \leq 2\|\Delta_A\|/p$ and $\sin(\beta-\alpha) \geq 12\|\Delta_A\|/(np)$, then 
$$
|C_{\beta} | \leq \left(\frac{np}{2\|\Delta_L\|} - 1\right)^{-1} |C_\alpha |,
$$
where $\Delta_A$ and $\Delta_L$ are defined in~\eqref{eq:DeltaNotation} and $C_\alpha$ and $C_\beta$ in~\eqref{eq:ImportantSet}.
\end{cor}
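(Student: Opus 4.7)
The plan is to combine Lemma~\ref{lem:Edges} with Theorem~\ref{thm:relativesize} by setting up the partition carefully and then doing the algebra to match the stated form.

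First, I would note that $C_\beta\subseteq C_\alpha$ since $\cos(\theta_k)\leq\cos(\beta)<\cos(\alpha)$ for $0<\alpha<\beta<\pi/2$, so the three sets $C_1=\overline{C_\alpha}$, $C_2=C_\alpha\setminus C_\beta$, $C_3=C_\beta$ form a partition of $V_G$. Lemma~\ref{lem:Edges} rewrites as $E_{C_1,C_3}\leq \lambda E_{C_3,C_3}$ with $\lambda=1/\sin(\beta-\alpha)$. The size condition $|C_2|<|C_1|$ reduces to $2|C_\alpha|<n+|C_\beta|$, which is satisfied because $|C_\alpha|\leq n/2$ (the edge case $|C_\beta|=0$ makes the conclusion trivial, so we may assume $|C_\beta|\geq 1$). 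Hence Theorem~\ref{thm:relativesize} applies and yields
\[
|C_\alpha|=|C_2|+|C_3|\geq\left(\frac{n\bigl(p(n-|C_\alpha|)-p\lambda|C_\beta|-\lambda\|\Delta_A\|\bigr)}{\|\Delta_L\|(n-|C_\alpha|)}-1\right)|C_\beta|.
\]

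Next, I would use $|C_\alpha|\leq n/2$, which gives $n/(n-|C_\alpha|)\leq 2$, to simplify the two error terms and obtain
\[
|C_\alpha|\geq\left(\frac{np}{\|\Delta_L\|}-\frac{2p\lambda|C_\beta|}{\|\Delta_L\|}-\frac{2\lambda\|\Delta_A\|}{\|\Delta_L\|}-1\right)|C_\beta|.
\]
The goal inequality $|C_\alpha|\geq\bigl(\tfrac{np}{2\|\Delta_L\|}-1\bigr)|C_\beta|$ would then follow as soon as the two subtracted terms are absorbed by half of $np/\|\Delta_L\|$, i.e. $np\geq 4\lambda\bigl(p|C_\beta|+\|\Delta_A\|\bigr)$.

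Finally, I would verify this last inequality from the two remaining hypotheses. The bound $|C_\beta|\leq 2\|\Delta_A\|/p$ turns $p|C_\beta|+\|\Delta_A\|$ into at most $3\|\Delta_A\|$, so it suffices that $np\geq 12\lambda\|\Delta_A\|$, which is precisely the assumption $\sin(\beta-\alpha)\geq 12\|\Delta_A\|/(np)$. Rearranging gives the claim. The only subtle point I anticipate is the bookkeeping in the application of Theorem~\ref{thm:relativesize}: making sure the partition indices match up with the roles of $C_1,C_2,C_3$ in that theorem (in particular, that the ``$C_1$'' in Theorem~\ref{thm:relativesize} corresponds to the \emph{complement} of $C_\alpha$, not $C_\alpha$ itself, because that is the side of the graph whose size we want in the denominator), and tracking the constants so that exactly the factor of $12$ appears; these are mechanical but easy to get wrong on a first pass.
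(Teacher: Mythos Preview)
Your proposal is correct and follows essentially the same route as the paper: the same partition $C_1=\overline{C_\alpha}$, $C_2=C_\alpha\setminus C_\beta$, $C_3=C_\beta$, the same invocation of Lemma~\ref{lem:Edges} followed by Theorem~\ref{thm:relativesize}, and the same estimate $p|C_\beta|+\|\Delta_A\|\leq 3\|\Delta_A\|$ to land on the factor $12$. The only cosmetic difference is that you take $\lambda=1/\sin(\beta-\alpha)$ directly, whereas the paper sets $\lambda=np/(12\|\Delta_A\|)\geq 1/\sin(\beta-\alpha)$ up front; your handling of the edge case $|C_\beta|=0$ for the strict inequality $|C_2|<|C_1|$ is a nice touch the paper glosses over.
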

\begin{proof} 
Let $\lambda = np/(12\|\Delta_A\|)$. By Lemma~\ref{lem:Edges}, we have $E_{C_\beta,\overline{C_\alpha}}\leq (1/\sin(\beta-\alpha))E_{C_\beta,C_\beta} \leq \lambda E_{C_\beta,C_\beta}$. Hence, by Theorem~\ref{thm:relativesize} (with $C_1 = \overline{C_\alpha}$, $C_2 = C_\alpha\setminus C_\beta$, and $C_3 = C_\beta$), we find that
\[
\begin{aligned} 
|C_\alpha| &\geq \left(\frac{n(p|\overline{C_\alpha}| - p\lambda|C_\beta|-\lambda\|\Delta_A\|)}{\|\Delta_L\| |\overline{C_{\alpha}}|}\right)|C_\beta|\\
& =  \left(\frac{np}{\|\Delta_L\|} -  \frac{n\lambda (p |C_\beta|+\|\Delta_A\|)}{\|\Delta_L\| |\overline{C_{\alpha}}|}\right)|C_\beta|\\
&\geq \left(\frac{np}{2\|\Delta_L\|} - 1\right)|C_\beta|,
\end{aligned} 
\]
where the last inequality holds if $\lambda \leq \frac{\|\Delta_L\||\overline{C_\alpha}|}{n(p|C_\beta| + \|\Delta_A\|)}\left(\frac{np}{2\|\Delta_L\|}+1\right)$. Since $|\overline{C_\alpha}|\geq n/2$ and $|C_{\beta} | \leq 2\|\Delta_A\|/p$, we find that $\lambda = np/(12\|\Delta_A\|)$ satisfies this upper bound. 
\end{proof} 

Note that it is only possible to have an $\alpha$ and $\beta$ such that $\sin(\beta-\alpha)\geq 12\|\Delta_A\|/(np)$ in Corollary~\ref{co:improvemnt} when $\|\Delta_A\|/(np)<1/12$. Corollary~\ref{co:improvemnt} can be used in a recursive fashion to improve the bound on $|C_{\pi/2}|$. Below, we start at $\alpha$ and incrementally increase $\beta$ to conclude that $C_{\pi/2}$ is empty. 

\begin{lemma}
\label{lem:stabilization}
Let $G$ be a graph of size $n$, $\theta$ a stable equilibrium of~\eqref{eq:dynamical}, $0< p\leq 1$, $\|\Delta_A\|/(np)<1/12$, and $\|\Delta_L\|/(np) < 1/4$. If for some $\alpha < \pi/2$ we have (i) $|C_\alpha | < 2\|\Delta_A\|/p$, and (ii)
$$ 
\frac{\pi/2 - \alpha}{\sin^{-1}\!\left(\frac{12\|\Delta_A\|}{np}\right) } > \frac {\log(n/6)}{\log\!\left(\frac{np}{2\|\Delta_L\|} - 1\right)} + 1,
$$
 then $C_{\pi/2}$ is empty and $\theta$ is the all-in-phase state.
\end{lemma}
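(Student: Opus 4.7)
The strategy is to iterate Corollary~\ref{co:improvemnt} along a finite sequence of angles $\alpha = \alpha_0 < \alpha_1 < \cdots < \alpha_M < \pi/2$, using its geometric amplification to drive $|C_{\alpha_k}|$ down until it is forced below $1$. I would set the step size $\delta := \sin^{-1}(12\|\Delta_A\|/(np))$, which is well-defined because the assumption $\|\Delta_A\|/(np) < 1/12$ keeps the argument of $\sin^{-1}$ inside $[0,1)$, and then take $\alpha_k = \alpha + k\delta$ and $M = \lfloor (\pi/2 - \alpha)/\delta \rfloor$. Setting the contraction factor $r := (np/(2\|\Delta_L\|) - 1)^{-1}$, the hypothesis $\|\Delta_L\|/(np) < 1/4$ gives $np/(2\|\Delta_L\|) > 2$, so $r < 1$ and each application of the corollary genuinely shrinks $|C_{\alpha_k}|$.

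Next I would verify the preconditions of Corollary~\ref{co:improvemnt} at each step with $(\alpha, \beta) = (\alpha_k, \alpha_{k+1})$. From $\|\Delta_A\|/(np) < 1/12$ we get $2\|\Delta_A\|/p < n/6$, so hypothesis (i) yields $|C_\alpha| < n/6 < n/2$. Because $C_\phi$ is monotonically decreasing in $\phi$ on $[0, \pi)$, the nested inclusions $C_{\alpha_{k+1}} \subseteq C_{\alpha_k} \subseteq C_\alpha$ propagate both $|C_{\alpha_k}| \leq n/2$ and $|C_{\alpha_{k+1}}| \leq 2\|\Delta_A\|/p$ to every $k < M$. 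The trigonometric requirement $\sin(\alpha_{k+1} - \alpha_k) = \sin\delta = 12\|\Delta_A\|/(np)$ holds by construction. Iterating the corollary $M$ times then gives
$$
|C_{\alpha_M}| \leq r^M \, |C_\alpha|.
$$

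To finish, I would invoke hypothesis (ii). Since $M > (\pi/2 - \alpha)/\delta - 1$, hypothesis (ii) implies $M \log(np/(2\|\Delta_L\|) - 1) > \log(n/6)$, and hence $r^M < 6/n$. Combined with $|C_\alpha| < n/6$, this forces $|C_{\alpha_M}| < 1$, so the nonnegative integer $|C_{\alpha_M}|$ must be zero. Because $\alpha_M < \pi/2$, the inclusion $C_{\pi/2} \subseteq C_{\alpha_M}$ shows $C_{\pi/2}$ is empty as well, so every phase satisfies $|\theta_k| < \pi/2$ and all oscillators lie in an open half-circle. The cited theorem for the homogeneous Kuramoto model~\cite{jadbabaie2004stability, dorfler2014synchronization} then identifies $\theta$ as the all-in-phase state. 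The only real obstacle is the bookkeeping that matches the additive ``$+1$'' in hypothesis (ii) against both the floor in the definition of $M$ and the replacement of $\log|C_\alpha|$ by the cruder uniform bound $\log(n/6)$; the substantive dynamical content has already been packaged inside Lemma~\ref{lem:Edges} and Corollary~\ref{co:improvemnt}, so what remains is essentially an elementary calculation.
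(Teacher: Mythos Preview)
Your proposal is correct and follows essentially the same argument as the paper: set the step $\delta=\sin^{-1}(12\|\Delta_A\|/(np))$, iterate Corollary~\ref{co:improvemnt} along $\alpha_k=\alpha+k\delta$ to obtain geometric decay of $|C_{\alpha_k}|$, and then use hypothesis~(ii) together with $|C_\alpha|<n/6$ to force $|C_{\alpha_M}|<1$, whence $C_{\pi/2}=\emptyset$. The only cosmetic difference is that the paper takes $M=\lceil(\pi/2-\alpha)/\delta-1\rceil$ rather than your $\lfloor(\pi/2-\alpha)/\delta\rfloor$; these agree except in the measure-zero case where $(\pi/2-\alpha)/\delta$ is an integer, and the paper's choice cleanly guarantees $\alpha_M<\pi/2$ in that edge case.
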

\begin{proof} 
Set $\beta_k = \alpha + k\sin^{-1}\left(12\|\Delta_A\|/(np)\right)$. Since we need $\beta_k<\pi/2$, we can take $0\leq k\leq M$, where 
\[
M = \Bigg\lceil \frac{\pi/2-\alpha}{\sin^{-1}\left(\frac{12\|\Delta_A\|}{np}\right)}-1\Bigg\rceil.
\]  
By Corollary~\ref{co:improvemnt} and the fact that $|C_\alpha | < 2\|\Delta_A\|/p < n/6$ (which ensures that $|C_\alpha |\leq n/2$), we have 
\[
\begin{aligned}
|C_{\beta_M}| &\leq \left(\frac{np}{2\|\Delta_L\|} - 1\right)^{\!-1}\!\!\!\!|C_{\beta_{M-1}}|\\
&\qquad\qquad \vdots\\ &\leq \left(\frac{np}{2\|\Delta_L\|} - 1\right)^{\!-M}\!\!\!\!|C_{\alpha}|.
\end{aligned} 
\]
Since $|C_{\pi/2}|\leq |C_{\beta_M}|$ and $|C_{\alpha}|\leq n/6$, to conclude that $|C_{\pi/2}|<1$ we need $(np/(2\|\Delta_L\|) - 1)^{-M}n/6<1$, i.e., $M > \log(n/6) / \log(np/(2\|\Delta_L\|) - 1)$ and the result follows. 
\end{proof} 

Finally, we summarize our findings. In particular, we can now provide a list of technical criteria that ensure that the network is globally synchronizing. 
\begin{theorem}
Let $G$ be a graph with $n$ vertices and $0<p<1$. If (i) $\|\Delta_A\|/(np)<1/12$, (ii) $\|\Delta_L\|/(np)<1/4$, and (iii)
$$ 
\frac{\pi/4}{\sin^{-1}\!\left(\frac{12\|\Delta_A\|}{np}\right) } > \frac {\log(n/6)}{\log\!\left(\frac{np}{2\|\Delta_L\|} - 1\right)} + 1,
$$
then the only stable equilibrium of~\eqref{eq:dynamical} is the all-in-phase state. 
\label{thm:MainGeneralResult} 
\end{theorem}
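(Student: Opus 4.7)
The plan is to reduce the theorem to Lemma~\ref{lem:stabilization} by choosing the single value $\alpha = \pi/4$. With this choice, $\pi/2 - \alpha = \pi/4$, so hypothesis (ii) of Lemma~\ref{lem:stabilization} becomes identical to hypothesis (iii) of the theorem, while hypotheses $\|\Delta_A\|/(np) < 1/12$ and $\|\Delta_L\|/(np) < 1/4$ required by Lemma~\ref{lem:stabilization} are exactly (i) and (ii) of the theorem. The only piece that is not immediate is the extra hypothesis of Lemma~\ref{lem:stabilization} that $|C_{\pi/4}| < 2\|\Delta_A\|/p$, which I would establish separately from the other lemmas of Section~\ref{sec:OrderParameter}.

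To verify $|C_{\pi/4}| < 2\|\Delta_A\|/p$, I would first combine Lemma~\ref{lem:OrderParameters} with the trivial lower bound $|\rho_2|^2 \geq 0$ to obtain $\rho_1^2 \geq 1/2 - 2\|\Delta_A\|/(np)$. Plugging in the assumption $\|\Delta_A\|/(np) < 1/12$ from hypothesis (i), this gives $\rho_1^2 \geq 1/3$, and in particular $\rho_1^2 > \|\Delta_A\|/(np)$. Next, I would apply Lemma~\ref{lem:LowerBoundSin} together with $\|\Delta_A q_\theta\|^2 \leq n \|\Delta_A\|^2$. The key observation is that each $j \in C_{\pi/4}$ contributes at least $1/2$ to the right-hand side of Lemma~\ref{lem:LowerBoundSin}: indeed, if $\pi/4 \leq |\theta_j| \leq 3\pi/4$ then $\sin^2(\theta_j) \geq 1/2$, while if $|\theta_j| \geq 3\pi/4$ then $\cos(\theta_j) \leq 0$ and $\cos^2(\theta_j) \geq 1/2$, so the second sum in Lemma~\ref{lem:LowerBoundSin} picks up the contribution. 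This yields $|C_{\pi/4}| \leq 2\|\Delta_A\|^2/(np^2\rho_1^2)$, and the strict inequality $\rho_1^2 > \|\Delta_A\|/(np)$ established above then gives $|C_{\pi/4}| < 2\|\Delta_A\|/p$, as needed.

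With all hypotheses of Lemma~\ref{lem:stabilization} satisfied at $\alpha = \pi/4$, that lemma tells us $C_{\pi/2}$ is empty for every stable equilibrium $\theta$ of~\eqref{eq:dynamical}. Consequently all phases lie strictly inside the open half-circle $(-\pi/2, \pi/2)$, and invoking the standard fact~\cite{jadbabaie2004stability, dorfler2014synchronization} that the only stable equilibrium with phases confined to a half-circle is the all-in-phase state completes the argument.

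The proof is essentially plug-and-play given the lemmas already in place, so there is no deep obstacle. The most delicate step is the dichotomy used to bound $|C_{\pi/4}|$: one cannot simply argue $\sin^2(\theta_j) \geq \sin^2(\pi/4) = 1/2$ for $j \in C_{\pi/4}$, since $\sin^2(\theta_j)$ can be arbitrarily small when $|\theta_j|$ is near $\pi$. It is essential here that Lemma~\ref{lem:LowerBoundSin} carries an auxiliary $\cos^2(\theta_j)$ term precisely for those oscillators with $\cos(\theta_j)\leq 0$, which rescues the bound in the regime $|\theta_j| \in [3\pi/4,\pi]$.
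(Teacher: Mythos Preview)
Your proof is correct and follows the same overall structure as the paper's: establish $\rho_1^2 > \|\Delta_A\|/(np)$, deduce $|C_{\pi/4}| < 2\|\Delta_A\|/p$, then invoke Lemma~\ref{lem:stabilization} at $\alpha=\pi/4$. The execution differs in two places worth noting. First, to obtain $\rho_1^2 > a := \|\Delta_A\|/(np)$, the paper combines~\eqref{eq:LowerBoundRho1} with~\eqref{eq:LowerBoundRho2} into the sextic inequality $\rho_1^6-(1-2a)\rho_1^4+2a^2\rho_1^2-2a^4\ge 0$ and argues about its roots; your route---plugging the trivial bound $|\rho_2|^2\ge 0$ into Lemma~\ref{lem:OrderParameters} to get $\rho_1^2\ge 1/2-2a > a$ whenever $a<1/6$---is shorter and fully sufficient under hypothesis~(i). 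Second, the paper cites~\eqref{eq:FirstBound} at $\phi=\pi/4$ to bound $|C_{\pi/4}|$, whereas you go back to Lemma~\ref{lem:LowerBoundSin} and explicitly handle the regime $|\theta_j|\in[3\pi/4,\pi]$ via the $\cos^2$ term; this is more careful, since the second inequality in~\eqref{eq:FirstBound} tacitly assumes $\sin^2(\theta_j)\ge\sin^2(\phi)$ for $j\in C_\phi$, which fails near $|\theta_j|=\pi$. Your dichotomy patches exactly that gap and yields the same numerical bound.
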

\begin{proof} 
Let $\theta$ be any stable equilibrium of~\eqref{eq:dynamical} on $G$.  By combining~\eqref{eq:LowerBoundRho1} and~\eqref{eq:LowerBoundRho2}, we find that 
\begin{equation} 
\rho_1^6 - (1-2a)\rho_1^4 + 2a^2\rho_1^2 - 2a^4\geq 0, \qquad a = \frac{\|\Delta_A\|}{np}. 
\label{eq:CombinedInequality} 
\end{equation}
Since $a<1/12$ by (i) (which implies that $a<1/5$),~\eqref{eq:CombinedInequality} ensures that $\rho_1^2>a$. Now select $\phi = \pi/4$. By~\eqref{eq:FirstBound}, we find that $|C_{\pi/4}| \leq 2\|\Delta_A\|^2/(np^2\rho_1^2)< 2\|\Delta_A\|/p$ as $\rho_1^2> \|\Delta_A\|/(np)$. By taking $\alpha = \pi/4$ in Lemma~\ref{lem:stabilization} and as (ii) holds, we find that $\theta$ is the all-in-phase state when (iii) is satisfied. 
\end{proof} 

Theorem~\ref{thm:MainGeneralResult} shows that a graph's global synchrony can be ensured by the size of $\|\Delta_A\|$ and $\|\Delta_L\|$ alone. This is particularly beneficial for random networks as $\|\Delta_A\|$ and $\|\Delta_L\|$ are quantities that are studied in the random matrix literature. 

\section{The global synchrony of \Erdos graphs}\label{sec:Erdos}
For an \Erdos random graph with probability $0<p<1$, we have~\cite{ling2018landscape} (also, see Theorem 6.6.1 of Ref.~\cite{Tropp}):
\[
\mathbb{P}\left[ 
\| \Delta_A \| \geq f(n,p)
\right] < 2n^{-1},\quad \mathbb{P}\left[ 
\| \Delta_L \| \geq 2f(n,p)
\right] < 2n^{-1},
\]
where $f(n,p) = 2\sqrt{n \log n \,p(1-p) } + 4(\log n)/3$.
Stronger probability bounds on $\|\Delta_A\|$ are available in the work of F\"{u}redi and Koml\'{o}s~\cite{FK} and Vu~\cite{Vu}; however, the bounds involve implicit constants that are difficult to track down. Since we desire explicit, and not just asymptotic statements, we start by not using these stronger probability bounds. 

Now, let $p > 0.256$ and $n=10^6$. One can verify that $f(n,p)/(np)<1/12$ and $2f(n,p)/(np)<1/4$ so that with probability $>0.999996$, (i) and (ii) in Theorem~\ref{thm:MainGeneralResult} hold. Moreover, one can also check that
\begin{equation} 
\frac{\pi/4}{\sin^{-1}\!\left(\frac{12f(n,p)}{np}\right) } > \frac {\log(n/6)}{\log\!\left(\frac{np}{4f(n,p)} - 1\right)} + 1.
\label{eq:LastIneq} 
\end{equation} 
Therefore, by Theorem~\ref{thm:MainGeneralResult}, an \Erdos graph with $p > 0.256$ and $n=10^6$ is globally synchronizing with probability $>0.999996$. For $n = 10^7$, we find that $p>0.0474$ suffices. 

To ensure that~\eqref{eq:LastIneq} holds as $n\rightarrow\infty$, we see that the left hand side of~\eqref{eq:LastIneq} must grow at least as fast as $\log(n)$. By taking $p = c\log^\gamma(n)/n$ for some $c>0$ and $\gamma>1$, we see that $f(n,p)$ shrinks like $\log^{1/2-\gamma/2}(n)$. Therefore, we need $\gamma>3$, i.e., $$p\gg \log^3(n)/n$$ for this argument to guarantee global synchrony. But as we mentioned, even stronger asymptotic probability bounds are available~\cite{FeigeOfek} on $\|\Delta_A\|$ and $\|\Delta_L\|$, where $f(n,p) = O(\sqrt{np(1-p)})$. With these stronger probability bounds, we find that $$p\gg \log^2(n)/n$$ as $n\rightarrow \infty$ is sufficient to conclude global synchrony of \ER graphs.

\subsection{Optimizing our bounds using a computer}\label{sec:compOpt}
For a given $n$, one can significantly improve the range of $p$ for which the corresponding \ER graph is globally synchronizing by using a computer (see Table~\ref{tab:ValuesOfP}). 
Our computer program can be turned into a proof and thus for $p$ above the thresholds in Table~\ref{tab:ValuesOfP}, the \ER networks are globally synchronizing with probability $>1-4/n$. However, writing the proofs down is unwieldy since the program works with bounds on $|C_\phi|$ for a thousand different values of $\phi$ and iteratively refines those bounds a hundred thousand times over.

By starting with $\rho_1 = 0$ and $\rho_2 = 0$, one can alternate between~\eqref{eq:LowerBoundRho1} and~\eqref{eq:LowerBoundRho2}---in an iterative fashion---to obtain a lower bound on $\rho_1$. The lower bound on $\rho_1$ can be substituted in~\eqref{eq:FirstBound} to give initial upper bounds on $|C_\phi|$ for $0\leq \phi\leq \pi/2$. One can then use Corollary~\ref{co:improvemnt} to progressively improve the bounds on $|C_\phi|$ for $0\leq \phi\leq \pi/2$. To do so, one selects $0<\alpha<\beta<\pi/2$ and attempts to apply  Corollary~\ref{co:improvemnt}. If the application of Corollary~\ref{co:improvemnt} is successful, then one also has $|C_\phi|\leq |C_\beta|$ for $\beta\leq \phi\leq \pi/2$. Since $|C_\phi|$ is integer-valued, any upper bound that is $<1$ implies that $C_\phi$ is empty. We repeat this procedure a hundred thousand times to refine the upper bounds on $|C_\phi|$ for $0\leq \phi\leq \pi/2$. If at any point we have $|C_{\pi/2}|<1$, then we conclude that the \ER graph is globally synchronizing with probability $>1-4/n$. In Table~\ref{tab:ValuesOfP}, we used the explicit value of $f(n,p) = 2\sqrt{n \log n \,p(1-p) } + 4(\log n)/3$ to bound the spectral norms of $\Delta_A$ and $\Delta_L$. 

\begin{table} 
\centering 
\renewcommand{\arraystretch}{1.5}
\begin{tabular}{c||cccc}
$n$ & $10^4$ & $10^5$ & $10^6$ & $10^7$ \\
\hline 
$p$\,\, & \,\,$>0.33237$\,\, & \,\,$>0.07168$\,\, & \,\,$>0.01117$\,\, &\,\,$>0.00157$\\
\end{tabular} 
\caption{The values of $p$ in the \ER random graph model for which we can prove global synchrony for $n = 10^4, 10^5, 10^6$, and $10^7$ with probability $>1-4/n$. We used a computer to recursively apply inequalities in our paper to obtain refined bounds on $|C_{\pi/2}|$. We include this table to demonstrate that our results are meaningful for \ER graphs of finite, practical size. It is possible that these lower bounds on $p$ can be improved by careful optimizations.}
\label{tab:ValuesOfP}
\end{table} 

There are several further improvements to our computer program that we tried: (1) Using stronger probability bounds~\cite{FK,Vu} for $\|\Delta_A\|$; (2) Using Lemma~\ref{lem:int_inequality} instead of Lemma~\ref{lem:Edges}; and (3) Doing additional optimizations to improve the upper bounds for $|C_\phi|$. For example, by selecting triples $0<\alpha<\beta_1<\beta_2<\pi/2$ and proving a generalization of Corollary~\ref{co:improvemnt},  we get bounds for $|C_{\beta_1}|$ in terms of $|C_\alpha|$ and $|C_{\beta_2}|$ and bounds for $|C_{\beta_2}|$ in terms of $|C_\alpha|$ and $|C_{\beta_1}|$. There are similar generalizations for more than three angles. For example, when $n = 10^{20}$, by using Lemma~\ref{lem:Edges} we can only show that $p>1.58\times 10^{-15}$ guarantees that an \ER graph is globally synchronizing with high probability, but with these extra improvements we find that $p>3.50\times 10^{-16}$ suffices. These improvements also provide good evidence---but not a proof---that \ER networks with $p\gg\log(n)/n$ globally synchronize with high probability as $n\rightarrow\infty$. 

\section{Discussion}
We have demonstrated how spectral properties of a graph's adjacency and graph Laplacian matrix can be used to understand the global synchrony of a Kuramoto model with identical oscillators coupled according to a network. For \ER graphs, we prove that $p\gg \log^2(n)/n$ is sufficient to ensure global synchrony with high probability as $n\rightarrow\infty$. As conjectured by Ling, Xu, and Bandeira,~\cite{ling2018landscape} we also believe that the global synchrony threshold is close to the connectivity threshold of $p\sim \log(n)/n$. With the aid of a computer and Lemma~\ref{lem:int_inequality}, we have convincing evidence that \ER networks with 
$p \gg \log(n)/n$ are globally synchronizing with high probability as $n\rightarrow\infty$ and it is a future challenge to write down a formal proof. While Section~\ref{sec:Erdos} focuses on \ER graphs, most of our analysis applies to any network and we hope that it can deliver intriguing results for other random graph models.

\begin{acknowledgments}
Research supported by Simons Foundation Grant 713557 to M.~K., NSF Grants No.~DMS-1513179 and No.~CCF-1522054 to S.~H.~S., and NSF Grants No.~DMS-1818757, No.~DMS-1952757, and No.~DMS-2045646 to A.~T.  We thank Lionel Levine and Mikael de la Salle on MathOverFlow for references on bounding $\|\Delta_A\|$ and $\|\Delta_L\|$ for \ER graphs. 
\end{acknowledgments}

\section*{Data Availability}
The data that supports the findings of this study are available within the article. 

\section*{References}

\end{document}